\title[Strongly uplifting cardinals and boldface resurrection]{Strongly uplifting cardinals and the boldface resurrection axioms}
\author[Hamkins]{Joel David Hamkins}
 \address[J.~D.~Hamkins]
        {Mathematics, Philosophy, Computer Science,
          The Graduate Center of The City University of New York,
          365 Fifth Avenue, New York, NY 10016
          \&
          Mathematics,
          College of Staten Island of CUNY,
          Staten Island, NY 10314}
\email{jhamkins@gc.cuny.edu}
\urladdr{http://jdh.hamkins.org}
\author[Johnstone]{Thomas A. Johnstone}
 \address[T. A. Johnstone]
         {Mathematics,
          New York City College of Technology of CUNY,
          NY 11201}
 \email{tjohnstone@citytech.cuny.edu}
 \urladdr{http://websupport1.citytech.cuny.edu/faculty/tjohnstone/}
\thanks{This article is a successor to~\cite{HamkinsJohnstone2014:ResurrectionAxiomsAndUpliftingCardinals}. The research of the first author has been supported in part by NSF grant DMS-0800762, PSC-CUNY grant 64732-00-42 and  Simons Foundation grant 209252, and the authors together are supported by grant 80209-06 20 from the CUNY Collaborative Incentive Award program. The research of the second author has been supported by a CUNY Scholar Incentive Award, PSC-CUNY grants 62803-00-40 and 64682-00-42, and by grants P20835-N13 and P21968-N13 from the Austrian Science Fund (FWF) during his 2009-2010 visit to the Kurt G\"odel Research Center at the University of Vienna. Commentary concerning this article can be made at http://jdh.hamkins.org/strongly-uplifting-cardinals-and-boldface-resurrection.}
\newtheorem{theorem}{Theorem}
\newtheorem{corollary}[theorem]{Corollary}
\newtheorem{question}[theorem]{Question}
\newtheorem{definition}[theorem]{Definition}
\newcommand{\QED}{\end{proof}}
\def\proclaim[#1]{{\bf #1}}
\def\BF#1.{{\bf #1.}}
\newcommand{\Los}{\L o\'s}
\newcommand{\Godel}{G\"odel}
\newcommand{\Levy}{L\'{e}vy}
\newcommand{\B}{{\mathbb B}}
\renewcommand{\P}{{\mathbb P}}
\newcommand{\Q}{{\mathbb Q}}
\newcommand{\R}{{\mathbb R}}
\newcommand{\continuum}{\mathfrak{c}}
\newcommand{\Gtail}{{G_{\!\scriptscriptstyle\rm tail}}}
\newcommand{\Ptail}{{\P_{\!\scriptscriptstyle\rm tail}}}
\newcommand{\Adot}{{\dot A}}
\newcommand{\Qdot}{{\dot\Q}}
\newcommand{\Rdot}{{\dot\R}}
\newcommand{\id}{\mathop{\hbox{\small id}}}
\newcommand{\from}{\mathbin{\vbox{\baselineskip=2pt\lineskiplimit=0pt
                         \hbox{.}\hbox{.}\hbox{.}}}}
\newcommand{\of}{\subseteq}
\newcommand{\set}[1]{\{\,{#1}\,\}}
\newcommand{\compose}{\circ}
\newcommand{\elesub}{\prec}
\newcommand{\Add}{\mathop{\rm Add}}
\newcommand{\Coll}{\mathop{\rm Coll}}
\newcommand{\plus}{{+}}
\newcommand{\restrict}{\upharpoonright} % uses amssymb
\newcommand{\satisfies}{\models}
\newcommand{\Union}{\bigcup}
\newcommand{\intersect}{\cap}
\newcommand{\LaverDiamond}{\mathop{\hbox{\line(0,1){10}\line(1,0){8}\line(-4,5){8}}\hskip 1pt}\nolimits}
\newcommand{\LD}{\LaverDiamond}
\newcommand{\smalllt}{\mathrel{\mathchoice{\raise2pt\hbox{$\scriptstyle<$}}{\raise1pt\hbox{$\scriptstyle<$}}{\raise0pt\hbox{$\scriptscriptstyle<$}}{\scriptscriptstyle<}}}
\newcommand{\smallleq}{\mathrel{\mathchoice{\raise2pt\hbox{$\scriptstyle\leq$}}{\raise1pt\hbox{$\scriptstyle\leq$}}{\raise1pt\hbox{$\scriptscriptstyle\leq$}}{\scriptscriptstyle\leq}}}
\newcommand{\lt}{\smalllt}
\newcommand{\ltkappa}{{{\smalllt}\kappa}}
\newcommand{\ltlambda}{{{\smalllt}\lambda}}
\newcommand{\ltgamma}{{{\smalllt}\gamma}}
\newcommand{\ltdelta}{{{\smalllt}\delta}}
\newcommand{\boolval}[1]{\mathopen{\lbrack\!\lbrack}\,#1\,\mathclose{\rbrack\!\rbrack}}
\def\[#1]{\boolval{#1}}
\newbox\gnBoxA
\newdimen\gnCornerHgt
\newdimen\gnArgHgt
\def\gcode #1{%
\setbox\gnBoxA=\hbox{$#1$}%
\gnArgHgt=\ht\gnBoxA%
\ifnum     \gnArgHgt<\gnCornerHgt \gnArgHgt=0pt%
\else \advance \gnArgHgt by -\gnCornerHgt%
\fi \raise\gnArgHgt\hbox{\tiny$\ulcorner$} \box\gnBoxA %
\raise\gnArgHgt\hbox{\tiny$\urcorner$}}
\newcommand{\UnderTilde}[1]{{\setbox1=\hbox{$#1$}\baselineskip=0pt\vtop{\hbox{$#1$}\hbox to\wd1{\hfil$\sim$\hfil}}}{}}
\newcommand{\Undertilde}[1]{{\setbox1=\hbox{$#1$}\baselineskip=0pt\vtop{\hbox{$#1$}\hbox to\wd1{\hfil$\scriptstyle\sim$\hfil}}}{}}
\newcommand{\undertilde}[1]{{\setbox1=\hbox{$#1$}\baselineskip=0pt\vtop{\hbox{$#1$}\hbox to\wd1{\hfil$\scriptscriptstyle\sim$\hfil}}}{}}
\newcommand{\UnderdTilde}[1]{{\setbox1=\hbox{$#1$}\baselineskip=0pt\vtop{\hbox{$#1$}\hbox to\wd1{\hfil$\approx$\hfil}}}{}}
\newcommand{\Underdtilde}[1]{{\setbox1=\hbox{$#1$}\baselineskip=0pt\vtop{\hbox{$#1$}\hbox to\wd1{\hfil\scriptsize$\approx$\hfil}}}{}}
\newcommand{\st}{\mid}
\renewcommand{\th}{{\hbox{\scriptsize th}}}
\renewcommand{\implies}{\mathrel{\rightarrow}}
\newcommand{\iso}{\cong}
\def\<#1>{\langle#1\rangle}
\newcommand{\REG}{\mathop{{\rm REG}}}
\newcommand{\ZFC}{{\rm ZFC}}
\newcommand{\CH}{{\rm CH}}
\newcommand{\MA}{{\rm MA}}
\newcommand{\RA}{{\rm RA}}
\newcommand{\MM}{{\rm MM}}
\newcommand{\PFA}{{\rm PFA}}
\newcommand{\SPFA}{{\rm SPFA}}
\newcommand{\HOD}{{\rm HOD}}
\newcommand{\OD}{{\rm OD}}
\newcommand{\cell}[1]{\boxit{\hbox to 17pt{\strut\hfil$#1$\hfil}}}
\newcommand{\head}[2]{\lower2pt\vbox{\hbox{\strut\footnotesize\it\hskip3pt#2}\boxit{\cell#1}}}
\newcommand{\boxit}[1]{\setbox4=\hbox{\kern2pt#1\kern2pt}\hbox{\vrule\vbox{\hrule\kern2pt\box4\kern2pt\hrule}\vrule}}
\newcommand{\Col}[3]{\hbox{\vbox{\baselineskip=0pt\parskip=0pt\cell#1\cell#2\cell#3}}}
\newcommand{\tapenames}{\raise 5pt\vbox to .7in{\hbox to .8in{\it\hfill input: \strut}\vfill\hbox to
.8in{\it\hfill scratch: \strut}\vfill\hbox to .8in{\it\hfill output: \strut}}}
\newcommand{\Head}[4]{\lower2pt\vbox{\hbox to25pt{\strut\footnotesize\it\hfill#4\hfill}\boxit{\Col#1#2#3}}}
\newcommand{\Dots}{\raise 5pt\vbox to .7in{\hbox{\ $\cdots$\strut}\vfill\hbox{\ $\cdots$\strut}\vfill\hbox{\
$\cdots$\strut}}}
\newcommand{\df}{\it} % use italic for definition terms. Idea: also use this to create an index of definitions, if MakeIndex is true.
\newcommand{\RAbfall}{\RAbf(\text{all})}
\newcommand{\smallgt}{\mathrel{\mathchoice{\raise2pt\hbox{$\scriptstyle>$}}{\raise1pt\hbox{$\scriptstyle>$}}{\raise0pt\hbox{$\scriptscriptstyle>$}}{\scriptscriptstyle>}}}
\newcommand{\ltjkappa}{{{\smalllt}j(\kappa)}}
\newcommand{\smallgeq}{\mathrel{\mathchoice{\raise2pt\hbox{$\scriptstyle\geq$}}{\raise1pt\hbox{$\scriptstyle\geq$}}{\raise1pt\hbox{$\scriptscriptstyle\geq$}}{\scriptscriptstyle\geq}}}
\newcommand{\wRA}{{\rm wRA}}
\newcommand{\RAbf}{{\UnderTilde\RA}}
\newcommand{\wRAbf}{{\rm w}\RAbf}
\newcommand{\proper}{\text{proper}}
\newcommand{\semiproper}{\text{semi-proper}}
\newcommand{\Hc}{H_\continuum}
\newcommand{\BFA}{\rm BFA}
\begin{document}

\begin{abstract}
 We introduce the strongly uplifting cardinals, which are equivalently characterized, we prove, as the superstrongly unfoldable cardinals and also as the almost-hugely unfoldable cardinals, and we show that their existence is equiconsistent over \ZFC\ with natural instances of the boldface resurrection axiom, such as the boldface resurrection axiom for proper forcing.
\end{abstract}

\maketitle

\section{Introduction}

The strongly uplifting cardinals, which we shall introduce in this article, are a boldface analogue of the uplifting cardinals of~\cite{HamkinsJohnstone2014:ResurrectionAxiomsAndUpliftingCardinals}, and are equivalently characterized as the superstrongly unfoldable cardinals and also as the almost-hugely unfoldable cardinals. In consistency strength, these new large cardinals lie strictly above the weakly compact, totally indescribable and strongly unfoldable cardinals and strictly below the subtle cardinals, which in turn are weaker in consistency than the existence of $0^\sharp$. The robust diversity of equivalent characterizations of this new large cardinal concept enables constructions and techniques from much larger large cardinal contexts, such as Laver functions and forcing iterations with applications to forcing axioms. Using such methods, we prove that the existence of a strongly uplifting cardinal (and hence also a superstrongly unfoldable or almost-hugely unfoldable cardinal) is equiconsistent over \ZFC\ with natural instances of the boldface resurrection axioms, including the boldface resurrection axiom for proper forcing, for semi-proper forcing, for c.c.c.~forcing and others. Thus, whereas in~\cite{HamkinsJohnstone2014:ResurrectionAxiomsAndUpliftingCardinals} we proved that the existence of a mere uplifting cardinal is equiconsistent with natural instances of the (lightface) resurrection axioms, here we adapt both of these notions to the boldface context.

These forcing arguments, we believe, evoke the essential nature of Baumgartner's seminal argument forcing \PFA\ from a supercompact cardinal, and so we are honored and pleased to be a part of this memorial issue in honor of James Baumgartner.

%%%%%%%%%%%%%%%%%%%%%%%%%%%%%%%%%%%%%%%%%%%%%%%%%%%%%%%%%%%%%%%%%%%%%%%%%%%%%%%%%%%%%%%%%%%%%
%%%%%%%%%%%%%% section:         THE STRONGLY UPLIFTING CARDINALS  %%%%%%%%%%%%%%%%%%%%%%%%%%% %%%%%%%%%%%%%%%%%%%%%%%%%%%%%%%%%%%%%%%%%%%%%%%%%%%%%%%%%%%%%%%%%%%%%%%%%%%%%%%%%%%%%%%%%%%%%

\section{Strongly uplifting, superstrongly unfoldable and almost-hugely unfoldable cardinals}\label{S.StrongUplift}

Let us now introduce the strongly uplifting cardinals, which strengthen the uplifting cardinal concept from~\cite{HamkinsJohnstone2014:ResurrectionAxiomsAndUpliftingCardinals} by the involvement of the predicate parameter $A$, allowing us to view the strong uplifting property as a boldface form of upliftingness.

\begin{definition}\label{Definition.ThetaStrUplift}
 \rm An inaccessible cardinal $\kappa$ is {\df strongly uplifting} if it is strongly $\theta$-uplifting for every ordinal $\theta$, which is to say that for every $A\of V_\kappa$ there is an inaccessible cardinal $\gamma\geq\theta$ and a set $A^*\of V_\gamma$ such that $\<V_\kappa,{\in},A>\elesub\<V_\gamma,{\in},A^*>$ is a proper elementary extension.
\end{definition}

This definition generalizes the concept of $\kappa$ being {\df uplifting}, which is simply the case where $A$ is trivial or omitted~\cite{HamkinsJohnstone2014:ResurrectionAxiomsAndUpliftingCardinals}. It would be equivalent to require the property only for $A\of\kappa$, since any predicate on $V_\kappa$ can be easily coded with a subset of $\kappa$, and we shall henceforth often adopt this perspective. Further, we needn't actually require here that $\kappa$ is inaccessible at the outset, but only an ordinal, since the inaccessibility of $\kappa$ and much more follows from the extension property itself (using just subsets $A\of\kappa$), as we explain in the proof of theorem~\ref{T.ExtensionCharacterizations}. It is immediate from the definition that every strongly uplifting cardinal is strongly unfoldable (and hence also weakly compact, totally indescribable and so on), since by the extension characterization of strong unfoldability (see~\cite{Villaveces1998:ChainsOfEndElementaryExtensionsOfModelsOfSetTheory,Villaveces&Leshem1999:FailureOfGCHatUnfoldableCardinals, Hamkins2001:UnfoldableCardinals}),  an inaccessible cardinal $\kappa$ is strongly unfoldable just in case for every ordinal $\theta$ and every $A\of\kappa$ there is $A^*$ and transitive set $W$ with $V_\theta\of W$, such that $\<V_\kappa,{\in},A>\elesub\<W,{\in},A^*>$. The strongly uplifting cardinals strengthen this by insisting that $W$ has the form $V_\gamma$ for some inaccessible cardinal $\gamma$. So strong unfoldability is a lower bound for strong upliftingness, and more refined lower bounds are provided by theorem~\ref{T.ConsistLowerBound}. For a crude upper bound, it is clear that if $\kappa$ is {\df super $1$-extendible}, which means that there are arbitrarily large $\theta$ for which there is an elementary embedding $j:V_{\kappa+1}\to V_{\theta+1}$, then $\kappa$ is also strongly uplifting, simply by letting $A^*=j(A)$ for any particular $A\of V_\kappa$. An improved upper bound in consistency strength is provided by the observation (theorem~\ref{T.ZeroSharp}) that if $0^\sharp$ exists, then every Silver indiscernible is strongly uplifting in $L$; a still lower upper bound is provided by the subtle cardinals in theorem~\ref{T.SubtleUpperBound}. Meanwhile, let's show that the strongly uplifting cardinals are downward absolute to the constructible universe $L$.

\begin{theorem}\label{Theorem.ThetaStrUpliftDwnAbsL}
 Every strongly uplifting cardinal is strongly uplifting in~$L$. Indeed, every strongly $\theta$-uplifting cardinal is strongly $\theta$-uplifting in~$L$.
\end{theorem}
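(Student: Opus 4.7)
The plan is to establish the stronger ``indeed'' statement, from which the first assertion follows by letting $\theta$ vary. Assume $\kappa$ is strongly $\theta$-uplifting in $V$. Since strong upliftingness entails inaccessibility and inaccessibility is downward absolute to $L$, $\kappa$ is inaccessible in $L$, so that $V_\kappa^L = L_\kappa$. Given $A\in L$ with $A\subseteq L_\kappa$, I view $A$ as a subset of $V_\kappa$ in $V$ and apply the strong $\theta$-upliftingness of $\kappa$ in $V$ to it, producing an inaccessible $\gamma\geq\theta$ in $V$ and a set $A^*\subseteq V_\gamma$ for which $\langle V_\kappa,\in,A\rangle\prec\langle V_\gamma,\in,A^*\rangle$ is a proper elementary extension. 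Downward absoluteness of inaccessibility again yields $\gamma$ inaccessible in $L$, so $V_\gamma^L = L_\gamma$ too.

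Since $A\subseteq L_\kappa = L^{V_\kappa}$, the sentence $\forall x\,(x\in A\to x\in L)$ holds in $\langle V_\kappa,\in,A\rangle$; by elementarity it transfers to $\langle V_\gamma,\in,A^*\rangle$, forcing $A^*\subseteq L^{V_\gamma} = L_\gamma$. Now relativizing every $\{\in,A\}$-formula $\varphi$ to $L$ yields, for any $\bar a\in L_\kappa$,
\[
\langle L_\kappa,\in,A\rangle\models\varphi[\bar a]
\iff \langle V_\kappa,\in,A\rangle\models\varphi^L[\bar a]
\iff \langle V_\gamma,\in,A^*\rangle\models\varphi^L[\bar a]
\iff \langle L_\gamma,\in,A^*\rangle\models\varphi[\bar a],
\]
so that $\langle L_\kappa,\in,A\rangle\prec\langle L_\gamma,\in,A^*\rangle$, properly because $\kappa\in L_\gamma\setminus L_\kappa$.

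The main obstacle is that $A^*$ is a set defined in $V$ and need not itself belong to $L$, whereas the strong $\theta$-upliftingness of $\kappa$ applied inside $L$ demands an $L$-witness. I would address this via a L\"owenheim--Skolem-and-condensation argument: inside $V$, take an elementary Skolem hull of $L_\kappa$ together with enough ordinals below $\gamma$ to push past $\theta$ in the structure $\langle L_\gamma,\in,A^*\rangle$, and then Mostowski-collapse the $\in$-reduct; by Jensen's condensation lemma the collapsed structure has the form $\langle L_{\bar\gamma},\in,B^*\rangle$ for some $\bar\gamma\geq\theta$ inaccessible in $L$ (inaccessibility being inherited from the ZFC-correctness of $L_\gamma$), and the pulled-back predicate gives $\langle L_\kappa,\in,A\rangle\prec\langle L_{\bar\gamma},\in,B^*\rangle$ properly. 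The crux is then verifying that $B^*\in L$, which I expect to follow from the observation that the elementary diagram $\mathrm{Th}(\langle L_\kappa,\in,A\rangle,L_\kappa)$ already lies in $L$ and is consistent (witnessed in $V$ by $\langle L_\gamma,\in,A^*\rangle$), so that a Henkin-style realization of this theory inside $L$, coupled with $\bar\gamma$'s $L$-inaccessibility, supplies the required $L$-witness.
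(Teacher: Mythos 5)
Your first half is fine and matches the paper's setup: applying strong $\theta$-upliftingness in $V$ to $A$ itself does give $\langle L_\kappa,{\in},A\rangle\elesub\langle L_\gamma,{\in},A^*\rangle$ properly, with $\gamma\geq\theta$ inaccessible in $L$ and $L_\gamma=V_\gamma^L$. But the crux you flag---getting a witness predicate that lies in $L$---is exactly where the argument breaks, and the repair you sketch does not work. First, after taking a hull in $V$ and collapsing, condensation gives you $L_{\bar\gamma}$, but there is no reason for $\bar\gamma$ to be inaccessible in $L$: the fact that $L_{\bar\gamma}\satisfies\ZFC$ does not even make $\bar\gamma$ a cardinal of $L$ (the least $\ZFC$-level of $L$ is countable in $L$), and once $\bar\gamma$ is not an $L$-cardinal you also lose $L_{\bar\gamma}=V_{\bar\gamma}^L$, so the collapsed structure cannot witness the definition even in its pseudo-uplifting form. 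Second, the collapsed predicate $B^*$ is produced from a hull computed in $V$ and condensation says nothing about it, so there is still no reason it belongs to $L$; and the Henkin remark does not bridge the gap, since consistency of the elementary diagram inside $L$ only yields some model of that theory in $L$, with no guarantee of well-foundedness, let alone of having the required form $\langle V_{\bar\gamma}^L,{\in},B^*\rangle$ with $\bar\gamma\geq\theta$ inaccessible in $L$. Producing such a rank-initial-segment target of inaccessible height is precisely what cannot be extracted from a compactness argument.

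The paper avoids the problem rather than repairing it afterward: since $A\in L$, fix $\beta<(\kappa^+)^L$ with $A\in L_\beta$ and a relation $E$ on $\kappa$ with $\langle\kappa,E\rangle\iso\langle L_\beta,{\in}\rangle$, and apply strong $\theta$-upliftingness in $V$ to $E$ (not to $A$). Elementarity transfers ``no infinite $E$-descending sequences'' and, since $\gamma$ is regular so $V_\gamma$ is closed under countable sequences, $E^*$ is genuinely well-founded; as $\langle\kappa,E\rangle\satisfies V=L$, the collapse of $\langle\gamma,E^*\rangle$ is some $L_{\beta^*}$. Taking $A^*$ to be the element of $L_{\beta^*}$ represented by the same ordinal $\alpha$ that represents $A$ with respect to $E$, elementarity gives $\langle L_\kappa,{\in},A\rangle\elesub\langle L_\gamma,{\in},A^*\rangle$ with $A^*\in L_{\beta^*}\of L$ automatically, and $\gamma$ inaccessible in $V$ hence in $L$. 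So the missing idea in your proposal is to code a level of $L$ containing $A$ into the predicate to which upliftingness is applied, so that constructibility of the target predicate comes for free.
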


\begin{proof}
Suppose that $\kappa$ is strongly $\theta$-uplifting in $V$. Since $\kappa$ is inaccessible, it is also inaccessible in $L$. Consider any set $A\of L_\kappa=V_\kappa^L$ with $A\in L$. Since $A$ is constructible, it must be that $A\in L_\beta$ for some $\beta<(\kappa^+)^L$. Let $E$ be a relation on $\kappa$ such that $\<\kappa,E>\iso\<L_\beta,{\in}>$. Since $\kappa$ is strongly $\theta$-uplifting in $V$, there is a proper elementary extension $\<V_\kappa,{\in},E>\elesub \<V_\gamma,{\in},E^*>$ for some inaccessible cardinal $\gamma\geq\theta$ and binary relation $E^*$ on $\gamma$. Since $E$ is well-founded, there are no infinite $E$-descending sequences in $V_\kappa$. Since $\gamma$ is regular and $V_\gamma$ is consequently closed under countable sequences, it follows by elementarity that $E^*$ is also well-founded. Further, since $\<V_\kappa,{\in},E>$ can verify that $\<\kappa,E>\satisfies V=L$, it follows by elementarity that $\<\gamma,E^*>$ also satisfies $V=L$, and since it is well-founded it must be that $\<\gamma,E^*>\iso \<L_{\beta^*},{\in}>$ for some ordinal $\beta^*$. Note that $A$ is a class in $\<V_\kappa,{\in},E>$ that is definable from parameters, since $A$ is represented by some ordinal $\alpha<\kappa$ in the structure $\<\kappa,E>$. If $A^*$ is the element of $L_{\beta^*}$ represented by the same $\alpha$ with respect to $E^*$, then it follows by elementarity that $\<L_\kappa,{\in},A>\prec\<L_\gamma,{\in},A^*>$, and since $A^*\in L$, we have witnessed the desired instance of strong $\theta$-uplifting.
\end{proof}

Recall from~\cite{HamkinsJohnstone2014:ResurrectionAxiomsAndUpliftingCardinals} that an inaccessible cardinal $\kappa$ is pseudo uplifting if for every ordinal $\theta$ there is some ordinal $\gamma\geq\theta$, not necessarily inaccessible, for which $V_\kappa\elesub V_\gamma$. Thus, the pseudo-uplifting property simply drops the requirement that the extension height $\gamma$ is inaccessible, and we observed in~\cite[thm~11]{HamkinsJohnstone2014:ResurrectionAxiomsAndUpliftingCardinals} that this change results in a strictly weaker notion. In the boldface context, it is tempting to define similarly that an ordinal $\kappa$ is {\df strongly pseudo uplifting} if for every ordinal $\theta$ it is strongly pseudo $\theta$-uplifting, meaning that for every $A\of \kappa$, there is an ordinal $\gamma\geq\theta$, not necessarily inaccessible, and a set $A^*\of \gamma$ for which $\<V_\kappa,{\in},A>\elesub\<V_\gamma,{\in},A^*>$. Similarly, in the other direction, we might want to define that $\kappa$ is strongly uplifting {\df with weakly compact targets}, if the corresponding extensions $\<V_\kappa,{\in},A>\elesub \<V_\gamma,{\in},A^*>$ can be found where $\gamma$ is weakly compact in $V$. In the boldface context, however, these changes do not actually result in different large cardinal concepts, for we shall presently show that it is equivalent to require nothing extra about the extension height $\gamma$, or to require that it is inaccessible, weakly compact, totally indescribable or much more.

\begin{theorem}[Extension characterizations]\label{T.ExtensionCharacterizations}
 A cardinal is strongly uplifting if and only if it is strongly pseudo uplifting, if and only if it is strongly uplifting with weakly compact targets. Indeed, for any ordinals $\kappa$ and $\theta$, the following are equivalent.
\begin{enumerate}
\item $\kappa$ is strongly pseudo $(\theta+1)$-uplifting. That is, $\kappa$ is an ordinal and for every $A\of \kappa$ there is an ordinal $\gamma>\theta$ and a set $A^*\of \gamma$ such that  $\<V_\kappa,{\in},A>\elesub\<V_\gamma,{\in},A^*>$ is a proper elementary extension.
\item $\kappa$ is strongly $(\theta+1)$-uplifting. That is, $\kappa$ is inaccessible and for every $A\of\kappa$ there is an inaccessible $\gamma>\theta$ and a set $A^*\of \gamma$ such that \hbox{$\<V_\kappa,{\in},A>\elesub\<V_\gamma,{\in},A^*>$} is a proper elementary extension.
\item $\kappa$ is strongly $(\theta+1)$-uplifting with weakly compact targets. That is, $\kappa$ is inaccessible and for every $A\of\kappa$ there is a weakly compact $\gamma$ and $A^*\of\gamma$ such that $\<V_\kappa,{\in},A>\elesub\<V_\gamma,{\in},A^*>$ is a proper elementary extension.
\item $\kappa$ is strongly $(\theta+1)$-uplifting with totally indescribable targets, and indeed with targets having any property of $\kappa$ that is absolute to all models $V_\gamma$ with $\gamma>\kappa,\theta$.
\end{enumerate}
\end{theorem}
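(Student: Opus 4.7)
The implications $(4)\Rightarrow(3)\Rightarrow(2)\Rightarrow(1)$ are immediate from the usual hierarchy of large cardinal properties, so the substance of the theorem is $(1)\Rightarrow(4)$, which I would handle in two stages: first $(1)\Rightarrow(2)$, then $(2)\Rightarrow(4)$.

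For $(1)\Rightarrow(2)$, I would first deduce the inaccessibility of $\kappa$ by clever parameter coding. If $\kappa$ were not a cardinal, a bijection $f\colon\alpha\to\kappa$ with $\alpha<\kappa$ could be coded into some $A\subseteq\kappa$; the proper elementary extension $\langle V_\kappa,\in,A\rangle\prec\langle V_\gamma,\in,A^*\rangle$ would then, by elementarity, yield a bijection $\alpha\to\gamma$ living in $V_\gamma$, contradicting $\alpha<\kappa<\gamma$. Parallel arguments with $A$ coding cofinal sequences of length $<\kappa$ or witnesses to $2^\mu\geq\kappa$ for some $\mu<\kappa$ establish regularity and the strong limit property, so $\kappa$ is inaccessible. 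Once this is in hand, the statements ``Ord is regular'' and ``Ord is a strong limit'' become first-order truths of $V_\kappa$, and elementarity lifts them to $V_\gamma$, making $\gamma$ inaccessible as well.

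For $(2)\Rightarrow(4)$, the crucial preliminary observation is that $(1)$ alone already implies $\kappa$ is strongly unfoldable: each target $V_\gamma$, being transitive with $V_\theta\subseteq V_\gamma$, directly serves as the transitive witness $W$ required by the extension characterization of strong unfoldability recalled in the introduction. Consequently $\kappa$ inherits total indescribability and $\Pi^1_n$-indescribability for every $n$. To arrange that the target $\gamma$ itself enjoys a prescribed absolute property $P$ shared by $\kappa$, the strategy is to augment $A$ with a canonical structural witness: for weakly compact targets, one codes into $A$ trees-with-branches data that by elementarity forces $A^*$ to provide branches for the corresponding trees in $V_\gamma$; for totally indescribable targets, one codes a reflection scheme; and in general, one codes whatever data makes $P(\gamma)$ decodable from the extended predicate $A^*$ inside $V_\gamma$. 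The absoluteness hypothesis then transfers the internal witnessing across $V_\gamma$ (or a slightly taller $V_\delta$) to the genuine property of $\gamma$ in the ambient universe.

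The hard part is arranging that $\gamma$ \emph{itself} satisfies $P$, rather than merely that $V_\gamma$ contains cofinally many ordinals below $\gamma$ that do; the latter is what a naive elementarity argument delivers, and it is strictly weaker than $P(\gamma)$ for non-reflecting properties like weak compactness. The resolution exploits the freedom of $A^*$ to reference elements of $V_\gamma\setminus V_\kappa$: by coding into $A$ a ``self-witnessing'' scheme that elementarity compels $A^*$ to complete at the level of $\gamma$ itself, rather than merely at cofinally many levels below, one pins down $P(\gamma)$. This is exactly where the ambient strength of the uplifting extension --- the target being a full $V_\gamma$, not merely some transitive set containing $V_\theta$ as with strong unfoldability --- is genuinely used.
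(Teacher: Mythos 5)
Your reduction $(4)\Rightarrow(3)\Rightarrow(2)\Rightarrow(1)$ and your parameter-coding argument that $(1)$ forces $\kappa$ to be inaccessible are fine and close to the paper. But the step where you get the \emph{target} to be inaccessible is wrong: ``$\Ord$ is regular'' is not a first-order truth of $\<V_\kappa,{\in}>$ that can be lifted to $V_\gamma$ to conclude $\gamma$ is regular. Any cofinal map from a smaller ordinal into $\gamma$ has rank $\gamma$ and so is invisible to $V_\gamma$; indeed $V_\gamma$ satisfies full $\ZFC$ (and even $V_\eta\elesub V_\gamma$ for club-many singular $\eta<\gamma$), so no amount of elementarity transfer from $V_\kappa$ can make the given $\gamma$ regular. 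This is precisely the point of the lightface distinction the paper recalls: pseudo-uplifting (arbitrary extension height) is consistently strictly weaker than uplifting (inaccessible extension height), so $(1)\Rightarrow(2)$ cannot be a direct transfer. The same defect infects your $(2)\Rightarrow(4)$ plan: you correctly identify that the hard part is getting $\gamma$ \emph{itself} to be weakly compact, indescribable, etc., but the proposed ``self-witnessing scheme'' is never specified, and its goal---constraining the property of whichever $\gamma$ the hypothesis happens to hand you---is unattainable, since no first-order condition on $\<V_\gamma,{\in},A^*>$ can even enforce regularity of $\gamma$.

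The missing idea in the paper's proof is to \emph{change the target} rather than constrain it. Given $A\of\kappa$, let $C\of\kappa$ be the club of $\delta$ with $\<V_\delta,{\in},A\intersect\delta>\elesub\<V_\kappa,{\in},A>$, and apply statement $(1)$ to the combined predicate $(A,C)$, obtaining $\<V_\kappa,{\in},A,C>\elesub\<V_\gamma,{\in},A^*,C^*>$ with $\gamma>\theta$ arbitrary. Then $\kappa\in C^*$, and $\kappa$ is inaccessible (resp.\ weakly compact, totally indescribable, or has any property of $\kappa$ absolute to such $V_\gamma$) in the sense of $V_\gamma$; since $\kappa$ is an element of $C^*$ with this property, elementarity between the two structures shows that such elements of $C$ are unbounded in $\kappa$, hence such elements of $C^*$ are unbounded in $\gamma$. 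Picking $\eta\in C^*$ above $\theta$ and $\kappa$ with the property, the membership $\eta\in C^*$ gives $\<V_\eta,{\in},A^*\intersect\eta>\elesub\<V_\gamma,{\in},A^*>$, which combined with $\kappa\in C^*$ yields $\<V_\kappa,{\in},A>\elesub\<V_\eta,{\in},A^*\intersect\eta>$, and absoluteness transfers the property of $\eta$ from $V_\gamma$ to $V$. So the inaccessible (or weakly compact, etc.) target is the new ordinal $\eta<\gamma$, not the original $\gamma$; without this pivot---coding the club of reflection points into the predicate and then moving down to a suitable member of its image---your argument does not go through.
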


\begin{proof} It is clear that $(4)\implies(3)\implies(2)\implies(1)$. Conversely, suppose that statement (1) holds. It is an easy exercise to see that $\kappa$ must be an inaccessible cardinal. Namely, $\kappa$ must be regular, for otherwise we may not have $\<V_\kappa,{\in},A>\elesub\<V_\gamma,{\in},A^*>$ when $A\of\kappa$ is a short cofinal subset of $\kappa$---meaning that the order type is less than $\kappa$---since this order type would be definable in the former structure, but different from the strictly larger corresponding order type of $A^*$ in the second structure, violating elementarity. Similarly, $\kappa$ must be a strong limit cardinal if there is any proper elementary extension $V_\kappa\elesub V_\gamma$ at all, or indeed any transitive extension $V_\kappa\elesub W$ with $\kappa\in W$, since otherwise we could find for some $\beta<\kappa$ a well-ordering of a subset of $P(\beta)$ having order-type exactly $\kappa$, and this order would be an element of $V_\kappa$ having no isomorphism with an ordinal in $V_\kappa$, but it would have such an isomorphism to an ordinal in $W$. So $\kappa$ must be inaccessible. Consider now any set $A\of\kappa$. Let $C\of\kappa$ be the club of ordinals $\delta<\kappa$ for which $\<V_\delta,{\in},A\intersect\delta>\elesub\<V_\kappa,{\in},A>$. Now, consider any proper extension $\<V_\kappa,{\in},A,C>\elesub\<V_\gamma,{\in},A^*,C^*>$, where $\gamma>\theta$, but $\gamma$ is not necessarily inaccessible. Because every element $\delta\in C$ has $\<V_\delta,{\in},A\intersect\delta>\elesub\<V_\kappa,{\in},A>$, it follows by elementarity that $\<V_\eta,{\in},A^*\intersect\eta>\elesub\<V_\gamma,{\in},A^*>$ for every $\eta\in C^*$. Since $\kappa\in C^*$ and $\kappa$ is inaccessible, it follows from $\<V_\kappa,{\in},C>\elesub \<V_\gamma,{\in},C^*>$ that the inaccessible cardinals in $C$ cannot be bounded below $\kappa$, and so by elementarity there must also be unboundedly many inaccessible $\eta\in C^*$. Fix some such inaccessible cardinal $\eta\in C^*$ above $\theta$ and $\kappa$. Combining the information, it follows that $\<V_\kappa,{\in},A>\elesub\<V_\eta,{\in},A^*\intersect\eta>$, and so we've witnessed (2) using the inaccessible cardinal $\eta$. Further, since $\kappa$ is weakly compact in $V_\gamma$, we also could find weakly compact $\eta\in C^*$ above $\theta$ and thereby verify statement (3). Similarly, since $\kappa$ is totally indescribable and much more that is witnessed in $V_\gamma$, strongly unfoldable up to $\gamma$ and so on, we may find corresponding $\eta$ in $C^*$ above $\theta$ and thereby witness statement (4). Namely, for any property of $\kappa$ in $V_\gamma$, we may find $\eta$ with this property in $V_\gamma$ for which $\<V_\kappa,{\in},A>\elesub\<V_\eta,{\in},A^*>$, since there will be unboundedly many such $\eta$ in the club $C^*$.
\end{proof}

One may generally use $H_\kappa$ instead of $V_\kappa$ in the characterizations, provided $\kappa$ is a cardinal. For example, $\kappa$ is strongly uplifting just in case it is a cardinal and for all $A\of \kappa$ there are arbitrarily large cardinals $\gamma$ with sets $A^*\of \gamma$ such that $\<H_\kappa,{\in},A>\elesub\<H_\gamma,{\in},A^*>$, and one may freely assume or not that $\gamma$ is inaccessible, weakly compact, totally indescribable and much more. Note that this boldface extension property for $H_\kappa$ implies that $\kappa$ is inaccessible: it is regular, as before, by using a short cofinal set $A\of\kappa$; and it is a strong limit, since if $2^\beta\geq\kappa$ for some $\beta<\kappa$, then we may divide $\kappa$ into $\kappa$ many interval blocks of size $\beta$ and let $A\of\kappa$ have a different subset of $\beta$ pattern on each block; if $\<H_\kappa,{\in},A>\elesub\<H_\gamma,{\in},A^*>$, then whichever subset of $\beta$ appears in $A^*$ on the block $[\kappa,\kappa+\beta)$ will not appear on $A$ at all, since the patterns on $A^*$ do not repeat, but this pattern is in $H_\kappa$ and does appear on $A^*$, violating elementarity. Note also that the properties in statement (4) include all $\Sigma_2$ properties of $\kappa$ that are realized in the relevant corresponding extensions $V_\gamma$.

We should like now to provide a number of embedding characterizations of the strongly uplifting property. These characterizations will continue the progression of embedding characterizations of the weakly compact cardinals, the indescribable cardinals, the unfoldable cardinals and the strongly unfoldable cardinals. Specifically, if $\kappa$ is any inaccessible cardinal and $\theta$ is any ordinal, then it is known that:
\begin{enumerate}
\item $\kappa$ is weakly compact if and only if for each $A\in H_{\kappa^\plus}$ there is a $\kappa$-model $M\satisfies\ZFC$ with $A\in M$ and a transitive set $N$ with an elementary embedding $j:M\to N$ with critical point $\kappa$.
\item $\kappa$ is $\theta$-unfoldable if and only if for each $A\in H_{\kappa^\plus}$ there is a $\kappa$-model $M\satisfies\ZFC$ with $A\in M$ and a transitive set $N$ with an elementary embedding $j:M\to N$ with critical point $\kappa$ and $j(\kappa)\geq\theta$.
\item$\kappa$ is strongly $\theta$-unfoldable if and only if for each $A\in H_{\kappa^\plus}$ there is a $\kappa$-model $M\satisfies\ZFC$ with $A\in M$ and a transitive set $N$ with an elementary embedding $j:M\to N$ with critical point $\kappa$ and $j(\kappa)\geq\theta$ and $V_\theta\of N$.
\end{enumerate}
For further details, see~\cite{HamkinsJohnstone2010:IndestructibleStrongUnfoldability}, and also~\cite{Villaveces1998:ChainsOfEndElementaryExtensionsOfModelsOfSetTheory, Villaveces&Leshem1999:FailureOfGCHatUnfoldableCardinals},~\cite{Hamkins2001:UnfoldableCardinals},~\cite{Johnstone2007:Dissertation, Johnstone2008:StronglyUnfoldableCardinalsMadeIndestructible},~\cite{Hamkins:ForcingAndLargeCardinals}. A {\df $\kappa$-model} is a transitive set $M$ of size $\kappa$ with $\kappa\in M$ and $M^\ltkappa\of M$, and satisfying the theory $\ZFC^-$, meaning \ZFC\ without power set\footnote{The theory $\ZFC^-$ should be axiomatized with the collection axiom and not merely the replacement axiom (and the axiom of choice should be taken as the well-order principle), especially as here in the context of ultrapower and extender embeddings, for reasons explored in detail in~\cite{GitmanHamkinsJohnstone:WhatIsTheTheoryZFC-Powerset?}, which shows that many expected results, including the \Los\ theorem, do not hold under the naive axiomatization, which is not equivalent to the correct formulation of $\ZFC^-$ axiomatization when one lacks the power set axiom.}\!, although the embedding characterizations above use full \ZFC. One often sees such embedding characterizations using only $M\satisfies\ZFC^-$, but it is equivalent to require full \ZFC\ as we have, as in footnote~\ref{F.specialkappamodel}, since if merely $M\satisfies\ZFC^-$, but $j:M\to N$ is elementary, then $M'=V_{j(\kappa)}^N$ would be a model of full \ZFC\ containing $A$, which could then be used with an embedding $j_1:M'\to N'$. These embedding characterizations are extremely robust, and they remain equivalent characterizations of these large cardinal notions even after diverse minor changes. For example,  one may consider only $A\of\kappa$ rather than $A\in H_{\kappa^+}$; one may add the requirement that $V_\kappa\elesub M$ holds for the $\kappa$-models $M$; there is no need to require $M\satisfies\ZFC$ or even $M\satisfies\ZFC^-$, as any transitive set will do;  one may drop the $M^\ltkappa\of M$ requirement and replace it by $2^{\lt\kappa}=\kappa$\,;  one gets embeddings $j:M\to N$ for every transitive structure of size $\kappa$; by composing embeddings, one may insist that $j(\kappa)>\theta$ and so on.

Just as strong unfoldability is a strong-cardinal analogue of unfoldability, it is natural to consider the corresponding superstrong and almost hugeness analogues of that notion.

\begin{definition}\rm\
 \begin{enumerate}
  \item An inaccessible cardinal $\kappa$ is {\df superstrongly unfoldable}, if for every ordinal $\theta$ it is {\df superstrongly $\theta$-unfoldable}, which is to say that for each $A\in H_{\kappa^\plus}$ there is a $\kappa$-model $M\satisfies\ZFC$ with $A\in M$ and a transitive set $N$ with an elementary embedding $j:M\to N$ with critical point $\kappa$ and $j(\kappa)\geq\theta$ and $V_{j(\kappa)}\of N$.
  \item An inaccessible cardinal $\kappa$ is {\df almost-hugely unfoldable}, if for every ordinal $\theta$ it is {\df almost-hugely $\theta$-unfoldable}, which is to say that for each $A\in H_{\kappa^\plus}$ there is a $\kappa$-model $M\satisfies\ZFC$ with $A\in M$ and a transitive set $N$ with an elementary embedding $j:M\to N$ with critical point $\kappa$ and $j(\kappa)\geq\theta$ and $N^{<j(\kappa)}\of N$.
 \end{enumerate}
\end{definition}

\noindent We needn't insist that $\kappa$ is inaccessible at the outset, since this follows from the properties in question. A natural weakening of these notions does not insist that one may find arbitrarily large such targets $j(\kappa)$, but only one. Namely, a cardinal $\kappa$ is {\df weakly superstrong}, if for every $A\in H_{\kappa^+}$ there is a $\kappa$-model $M\satisfies\ZFC$ with $A\in M$ and an elementary embedding $j:M\to N$ into a transitive set $N$ with critical point $\kappa$ and $V_{j(\kappa)}\of N$. And similarly, $\kappa$ is {\df weakly almost huge}, if for every $A\in H_{\kappa^+}$ there is such $j:M\to N$ with $N^{<j(\kappa)}\of N$.

Remarkably, the superstrongly unfoldable cardinals are precisely the same as the almost-hugely unfoldable cardinals, which are precisely the same as the strongly uplifting cardinals. This phenomenon can be viewed as an extension of the fact pointed out by Hamkins and Dzamonja~\cite{DzamonjaHamkins2006:DiamondCanFail}, that the strongly unfoldable cardinals are equivalently characterized both in terms of strongness type embeddings $j:M\to N$ with $V_\theta\of N$, and also in terms of supercompactness type embeddings $j:M\to N$ with $N^\theta\of N$. Similarly, here, we have strong upliftness characterized both in terms of superstrongness type embeddings $j:M\to N$ with $V_{j(\kappa)}\of N$ and also equivalently in terms of almost hugeness embeddings $j:M\to N$, with $N^{<j(\kappa)}\of N$.

\begin{theorem}[Embedding characterizations]\label{T.EmbeddingCharacterizations} A cardinal is strongly uplifting if and only if it is superstrongly unfoldable. Indeed, for any cardinal $\kappa$ and ordinal $\theta$, the following are equivalent.
 \begin{enumerate}
  \item $\kappa$ is strongly $(\theta+1)$-uplifting.
  \item $\kappa$ is superstrongly $(\theta+1)$-unfoldable.
  \item $\kappa$ is almost-hugely $(\theta+1)$-unfoldable.
  \item  For every set $A\in H_{\kappa^\plus}$ there is a $\kappa$-model $M\satisfies\ZFC$ with $A\in M$ and $V_\kappa\elesub M$ and a transitive set $N$ with an elementary embedding $j:M\to N$ having critical point $\kappa$ with $j(\kappa)>\theta$ and $V_{j(\kappa)}\elesub N$, such that $N^{<j(\kappa)}\of N$ and $j(\kappa)$ is inaccessible, weakly compact and more in $V$.
  \item $\kappa^\ltkappa=\kappa$ holds, and for every $\kappa$-model $M$ there is an elementary embedding $j:M\to N$ having critical point $\kappa$ with $j(\kappa)>\theta$ and $V_{j(\kappa)}\of N$, such that $N^{<j(\kappa)}\of N$ and $j(\kappa)$ is inaccessible, weakly compact and more in $V$.
 \end{enumerate}
\end{theorem}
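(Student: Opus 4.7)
The plan is to establish the cycle $(1) \Rightarrow (5) \Rightarrow (4) \Rightarrow (3) \Rightarrow (2) \Rightarrow (1)$. The weakening $(4) \Rightarrow (3)$ is immediate. For $(5) \Rightarrow (4)$, given $A \in H_{\kappa^+}$, I would construct via \Lowenheim--Skolem a $\kappa$-model $M$ with $A \in M$ and $V_\kappa \elesub M$, and then apply (5) to this $M$; the extra elementarity $V_{j(\kappa)} \elesub N$ demanded in (4) is recovered from the $M$-internal assertion ``$V_\kappa$ is an elementary substructure of the universe'' transferred by $j$, combined with the identification $V_{j(\kappa)}^N = V_{j(\kappa)}$ from $V_{j(\kappa)} \of N$ in (5). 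Theorem~\ref{T.ExtensionCharacterizations} is what supplies the fact that $j(\kappa)$ is inaccessible, weakly compact, and more in $V$.

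For $(3) \Rightarrow (2)$, arranging $V_\kappa \in M$ by augmenting the coded set, I would show $V_{j(\kappa)} \of N$ by induction on rank $\alpha < j(\kappa)$: granted $V_\alpha \of N$ with $V_\alpha \in N$, inaccessibility of $j(\kappa)$ in $N$ gives $|V_\alpha|^N < j(\kappa)$, and hence $|V_\alpha|^V < j(\kappa)$, so any $X \of V_\alpha$ is a $V$-sequence of $N$-elements of length $< j(\kappa)$ and thus lies in $N$ by $N^{<j(\kappa)} \of N$. For $(2) \Rightarrow (1)$, given $A \of \kappa$ and augmenting $A$ to ensure $V_\kappa \in M$ (so $V_\kappa^M = V_\kappa$, and $V_{j(\kappa)}^N = V_{j(\kappa)}$ from $V_{j(\kappa)} \of N$), the restriction $j \restrict V_\kappa$ is elementary from $\<V_\kappa,{\in},A>$ into $\<V_{j(\kappa)},{\in},j(A)>$, with $j(\kappa) > \theta$ inaccessible in $V$ by absoluteness from the transitive $N$ containing $V_{j(\kappa)}$.

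The main step is $(1) \Rightarrow (5)$. Given a $\kappa$-model $M$, I would code the structure $\<M, {\in}>$ as a well-founded extensional relation $E$ on $\kappa$, arranged via a bijection $\kappa \to M$ so that ordinals below $\kappa$ are fixed by the induced Mostowski collapse and $\kappa \in M$ corresponds to a specific ordinal $\alpha_0 < \kappa$, and I would simultaneously fix a $V$-wellorder $W$ of $V_\kappa$ of order-type $\kappa$ as an additional predicate. By Theorem~\ref{T.ExtensionCharacterizations} applied to $A = \<E, W>$, strong $(\theta+1)$-upliftingness yields a proper elementary extension $\<V_\kappa,{\in},E,W> \elesub \<V_\gamma,{\in},E^*,W^*>$ with $\gamma > \theta$ inaccessible, weakly compact, and enjoying any absolute property of $\kappa$. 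Elementarity makes $(\gamma, E^*)$ well-founded and extensional; let $N$ be its Mostowski collapse and $j: M \to N$ the induced map, which is elementary with critical point $\kappa$ and $j(\kappa) = \gamma$ by the choice of coding.

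The principal obstacle is verifying the closure conditions $V_{j(\kappa)} \of N$ and $N^{<j(\kappa)} \of N$ of statement (5). Here the wellorder $W$ is essential: by elementarity, $W^*$ is a $V$-wellorder of $V_\gamma$ of order-type $\gamma$, so every element of $V_\gamma$ is coded by an ordinal below $\gamma$ in the extended structure and therefore lies in $N$, yielding $V_\gamma \of N$. For the sequence-closure of $N$, any $<\gamma$-sequence from $N$ compresses via $W^*$ into a $<\gamma$-sequence of ordinals below $\gamma$, which lies in $V_\gamma \of N$, whence the original sequence is definable inside $N$ from this ordinal coding and $W^*$. Making the coding bookkeeping precise so that $j$ has critical point exactly $\kappa$ with $j(\kappa) = \gamma$, and that both closure conditions are witnessed by the same $N$, is the delicate final ingredient of the construction.
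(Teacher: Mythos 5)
Your overall architecture is sound and close to the paper's (which proves the cycle $(1)\Rightarrow(4)\Rightarrow(5)\Rightarrow(3)\Rightarrow(2)\Rightarrow(1)$), and your steps $(4)\Rightarrow(3)$ and $(3)\Rightarrow(2)$ are fine. But the key step $(1)\Rightarrow(5)$ has a genuine gap: the auxiliary wellorder $W$ does not deliver either closure property. Membership in $N$ is governed by $E^*$, not $W^*$: the fact that $W^*$ enumerates $V_\gamma$ in order type $\gamma$ does not place the elements of $V_\gamma$ into the transitive collapse of $\langle\gamma,E^*\rangle$, so ``therefore lies in $N$'' is a non sequitur. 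What actually works (and is the paper's argument) is that $V_\kappa\subseteq M$ for any $\kappa$-model when $\kappa$ is inaccessible, so $\langle V_\kappa,{\in},E\rangle$ satisfies the first-order statement that every set is the $E$-collapse of some ordinal; transferring this by elementarity to $E^*$ gives $V_\gamma\subseteq N$. The sequence-closure claim is worse: $N$ is \emph{not} a subset of $V_\gamma$ (its ordinal height exceeds $\gamma$), so a ${<}\gamma$-sequence from $N$ need not lie in the field of $W^*$ and cannot be ``compressed via $W^*$''; and even for elements of $V_\gamma$, decoding such a compressed sequence inside $N$ would require $E^*$ or the collapse map, neither of which is an element of $N$. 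The correct route expresses inside $\langle V_\kappa,{\in},E\rangle$, using $M^{{<}\kappa}\subseteq M$, that $\langle\kappa,E\rangle$ is closed under the ${<}\kappa$-sequences lying in $V_\kappa$, transfers this by elementarity, and then uses that $V_\gamma$ is correct about ${<}\gamma$-sequences of ordinals because $\gamma$ is inaccessible; the predicate $W$ is a red herring throughout.

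Two further steps need repair. In $(5)\Rightarrow(4)$, producing a $\kappa$-model $M$ with $A\in M$ \emph{and} $V_\kappa\prec M$ is not a routine L\"owenheim--Skolem matter: collapsing an elementary submodel of $H_{\kappa^+}$ (or of an arbitrary $H_\lambda$) gives $V_\kappa\subseteq M$ but not $V_\kappa\prec M$, since $V_\kappa\not\prec H_{\kappa^+}$. You need the weak-compactness factor-embedding construction (take an embedding $j\colon M'\to N'$ with $N'$ of size $\kappa$ and closed under ${<}\kappa$-sequences, and set $M=(V_{j(\kappa)})^{N'}$, as in the paper's footnote~\ref{F.specialkappamodel}), or an elementary submodel of some $V_\gamma$ with $V_\kappa\prec V_\gamma$ --- and the latter uses upliftingness, which in your ordering of the cycle is not yet available from $(5)$ without passing through $(4)$. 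Finally, in $(2)\Rightarrow(1)$ the claim that $j(\kappa)$ is inaccessible in $V$ ``by absoluteness from the transitive $N$ containing $V_{j(\kappa)}$'' is false: strong-limitness does transfer up, but regularity does not, since a short cofinal map into $j(\kappa)$ need not belong to $N$; indeed the paper remarks that superstrong unfoldability embeddings carry no guarantee that $j(\kappa)$ is inaccessible. The fix is to note that $\langle V_\kappa,{\in},A\rangle\prec\langle V_{j(\kappa)},{\in},j(A)\rangle$ already witnesses strong \emph{pseudo} $(\theta+1)$-upliftingness and then invoke theorem~\ref{T.ExtensionCharacterizations} to upgrade to inaccessible (indeed weakly compact) targets.
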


\begin{proof}
($1\implies 4$) Suppose that $\kappa$ is strongly $(\theta+1)$-uplifting, and consider any set $A\in H_{\kappa^\plus}$. Since $\kappa$ is weakly compact, there is a $\kappa$-model $M\satisfies\ZFC$ with $A\in M$ and $V_\kappa\elesub M$.\footnote{Code the set $A$ by a set $\tilde{A}\of\kappa$ and find first a $\kappa$-model $M'$ with $\tilde{A}\in M'$; use the weak compactness of $\kappa$ to find a transitive set $N$ with an elementary embedding $j:M'\to N$ with critical point $\kappa$, and by using the induced factor embedding, if necessary, assume that $N$ has size $\kappa$ and $N^{\lt\kappa}\of N$. The set $M=(V_{j(\kappa)})^N$ is then the desired $\kappa$-model satisfying $\ZFC$ with $V_\kappa\elesub M$ and $A\in M$.\label{F.specialkappamodel}} Since this structure has size~$\kappa$, we may find a well-founded relation $E$ on $\kappa$ and an isomorphism $\pi:\<M,{\in}>\iso\<\kappa,E>$. By our assumption on $\kappa$, there is an inaccessible cardinal $\gamma$ above $\theta$ and some $E^*\of\gamma$ such that $\<V_\kappa,{\in},E>\elesub \<V_\gamma,{\in},E^*>$, and by theorem~\ref{T.ExtensionCharacterizations} we may also assume that $\gamma$ is weakly compact, totally indescribable and indeed much more. Since $E$ is well-founded, it follows by elementarity that $E^*$ has no infinite descending sequences in $V_\gamma$, and since $\gamma$ is regular, this means that $E^*$ is really well-founded. Let $\tau:\<\gamma,E^*>\to\<N,{\in}>$ be the transitive collapse of $E^*$ onto a transitive set $N$, and let $j=\tau\circ\pi$ be the composition map, so that $j:M\to N$ is an elementary embedding with $A\in M$. Note that $j$ fixes ordinals below $\kappa$, because if $\alpha$ is coded by $\xi$  with respect to $E$, then it is also coded by $\xi$ with respect to $E^*$, and so $j(\alpha)=\alpha$. If $\kappa$ is represented by $\alpha$ with respect to $E$, then $\gamma$ will be represented by $\alpha$ with respect to $E^*$, since this property is expressible in $\<V_\kappa,{\in},E>\elesub\<V_\gamma,{\in},E^*>$, and so $j(\kappa)=\tau(\pi(\kappa))=\tau(\alpha)=\gamma$. Thus, the map $j$ has critical point $\kappa$, with $j(\kappa)=\gamma$ being an inaccessible cardinal above $\theta$. Since the structure $\<V_\kappa,{\in},E>$ sees that each of its elements is coded by an ordinal via $E$, it follows by elementarity that each of the elements of $V_\gamma$ is coded by an ordinal via $E^*$, and so $V_{j(\kappa)}=V_\gamma\of N$. Similarly, since $M^{\ltkappa}\of M$, it follows that $\<V_\kappa,{\in},E>$ believes that the structure $\<\kappa,E>$ is closed under $\ltkappa$-sequences (that is, for any $\beta<\kappa$ and any $\beta$-sequence $\<x_\alpha\st\alpha<\beta>$ of ordinals below $\kappa$, there is $s<\kappa$ such that $\<\kappa,E>$ thinks $s$ is a sequence, whose $\alpha^{\rm th}$ member is precisely $x_\alpha$), and so by elementarity the corresponding fact is true of $\<V_\gamma,{\in},E^*>$. Since $V_\gamma$ itself is correct about $[\gamma]^{\ltgamma}$, this implies $N^{\ltgamma}\of N$, or in other words, $N^{<j(\kappa)}\of N$. Finally, since we chose $M$ such that $V_\kappa=V_\kappa^M\elesub M$, it follows by elementarity that $V_{j(\kappa)}=(V_{j(\kappa)})^N\elesub N$, as desired.

($4\implies 5$) The embedding property (4) asserts the existence of $\kappa$-models, which implies $\kappa^{\lt\kappa}=\kappa$, and it then follows that $\kappa$ is inaccessible. If $M$ is a $\kappa$-model, then by statement (4) there is another $\kappa$-model $\bar{M}\satisfies\ZFC$ with $M\in \bar{M}$ and a transitive set $N$ with an embedding $j:\bar{M}\to N$ with critical point $\kappa$ with $j(\kappa)>\theta$ and $V_{j(\kappa)}\of N$, such that $N^{<j(\kappa)}\of N$ and $j(\kappa)$ is weakly compact and more. Since $M^{\ltkappa}\of M$, it follows that $j(M)^{<j(\kappa)}\of j(M)$ inside $N$, and since $N^{<j(\kappa)}\of N$ we know that $N$ is correct about this. It follows that $V_{j(\kappa)}\of N$ as well, and so the restricted embedding $j\restrict M:M\to j(M)$ verifies statement (5).

($5\implies 3$) This direction is immediate, since $\kappa^\ltkappa=\kappa$ implies that every set $A\of \kappa$ can be placed into some $\kappa$-model.

($3\implies 2$) This is immediate, since $N^{<j(\kappa)}\of N$ implies $V_{j(\kappa)}\of N$, as it is easy to see that $j(\kappa)$ must be inaccessible.

($2\implies 1$) Suppose that $\kappa$ is superstrongly $(\theta+1)$-unfoldable. It follows easily that $\kappa$ is inaccessible. To see that $\kappa$ is strongly $(\theta+1)$-uplifting, we verify the extension property of theorem~\ref{T.ExtensionCharacterizations} statement~(1). For any $A\of\kappa$, there is a $\kappa$-model $M$ with $A\in M$ and $j:M\to N$ with critical point $\kappa$, for which $j(\kappa)>\theta$ and $V_{j(\kappa)}\of N$, and consequently $j(V_\kappa)=V_{j(\kappa)}$. If $A^*=j(A)$, then it follows by elementarity that $\<V_\kappa,{\in},A>\elesub\<V_{j(\kappa)},{\in},A^*>$, witnessing this instance of $\kappa$ being strongly $(\theta+1)$-uplifting.
\end{proof}

Note particularly that in the superstrongly unfoldable embedding characterization, there is no stipulation that $j(\kappa)$ must be inaccessible; but nevertheless, by the other embedding characterizations, one may always find alternative superstrong unfoldability embeddings still above $\theta$ for which $j(\kappa)$ is inaccessible, weakly compact and more, just as in theorem~\ref{T.ExtensionCharacterizations}. Theorem~\ref{T.EmbeddingCharacterizations} was stated in terms of the successor ordinal $\theta+1$, a case amounting to the requirement that $j(\kappa)>\theta$, and in this case all the notions are locally equivalent, but similar arguments show that some of the notions are locally equivalent for every ordinal $\theta$, not just successor ordinals. Namely, a cardinal $\kappa$ is strongly $\theta$-uplifting if and only if it is almost-hugely $\theta$-unfoldable. As a result, one should regard almost-hugely $\theta$-unfoldability as the right embedding characterization of strong $\theta$-upliftingness. Meanwhile, when $\theta$ is a singular limit cardinal, these notions are not in general equivalent to $\kappa$ being superstrongly $\theta$-unfoldable, since it can happen that a cardinal $\kappa$ is superstrongly $\theta$-unfoldable for such a singular $\theta$, but not even $\theta$-uplifting, let alone strongly $\theta$-uplifting.

Next, we consider the difference in consistency strength between uplifting cardinals and strongly uplifting cardinals. In the case of unfoldable cardinals, Villaveces~\cite{Villaveces1998:ChainsOfEndElementaryExtensionsOfModelsOfSetTheory} showed that every unfoldable cardinal is unfoldable in $L$, and every unfoldable cardinal in $L$ is strongly unfoldable there. Thus, unfoldability and strong unfoldability are equiconsistent as large cardinal hypotheses. For the case of uplifting and strong uplifting, in contrast, we shall show presently that there is a definite step up in consistency strength. While uplifting cardinals are weaker than Mahlo cardinals in consistency strength, theorems~\ref{T.ConsistLowerBound} and~\ref{T.SubtleUpperBound} show that the consistency strength of the existence of a strongly uplifting cardinal, if consistent, lies strictly between the existence of a strongly unfoldable cardinal and the existence of a subtle cardinal.

In analogy with the various large cardinal Mitchell rank concepts, we defined in~\cite{HamkinsJohnstone2010:IndestructibleStrongUnfoldability} that a strongly unfoldable cardinal $\kappa$ is strongly unfoldable {\df of degree $\alpha$}, for an ordinal $\alpha$, if  for every ordinal $\theta$ it is {\df $\theta$-strongly unfoldable of degree $\alpha$}, meaning that for each $A\in H_{\kappa^\plus}$ there is a $\kappa$-model $M\satisfies \ZFC$ with $A\in M$ and a transitive set $N$ with $\alpha\in N$ and an elementary embedding $j:M\to N$ having critical point $\kappa$ with $j(\kappa)>\max\{\theta,\alpha\}$ and $V_\theta\of N$, such that $\kappa$ is strongly unfoldable of every degree $\beta<\alpha$ in $N$.\footnote{Technically, in~\cite{HamkinsJohnstone2010:IndestructibleStrongUnfoldability} we had only required that the domain $M$ of the elementary embedding $j$ is a transitive set of size $\kappa$ with $M\satisfies\ZFC^{-}$ and $\kappa,A\in M$; however, by restricting such $j$ to a $\kappa$-model $M$ as in footnote~\ref{F.specialkappamodel} with $V_\kappa\elesub M$, if necessary, we may assume without loss that the domain $M$ is a $\kappa$-model satisfying all of $\ZFC$.}
An inaccessible cardinal $\kappa$ is \emph{$\Sigma_2$-reflecting,} if $V_\kappa\elesub_{\Sigma_2}V$, and it is easy to see that every strongly uplifting cardinal and even merely every pseudo-uplifting cardinal is $\Sigma_2$-reflecting.

\begin{theorem} \label{T.ConsistLowerBound}
 If $\kappa$ is strongly uplifting, then $\kappa$ is strongly unfoldable, and furthermore, strongly unfoldable of every ordinal degree $\alpha$, and a stationary limit of cardinals that are strongly unfoldable of every ordinal degree and so on.
\end{theorem}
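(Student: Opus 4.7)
The plan is to establish the claim by induction on the ordinal degree $\alpha$, exploiting the embedding characterization of strongly uplifting cardinals as almost-hugely unfoldable cardinals (Theorem~\ref{T.EmbeddingCharacterizations}). The base case $\alpha=0$ reduces to strong unfoldability, which follows from the observation in the paragraph after Definition~\ref{Definition.ThetaStrUplift}: any strongly uplifting extension $\langle V_\kappa,{\in},A\rangle\elesub\langle V_\gamma,{\in},A^*\rangle$ with $\gamma\geq\theta$ inaccessible automatically supplies $V_\theta\subseteq V_\gamma$, satisfying the extension characterization of strong unfoldability.

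For the inductive step, suppose $\kappa$ is strongly unfoldable of every degree $\beta<\alpha$ in $V$. Given $\theta$ and $A\in H_{\kappa^+}$, I would invoke Theorem~\ref{T.EmbeddingCharacterizations}(4) to produce a $\kappa$-model $M$ containing $A$ and $\alpha$, with $V_\kappa\elesub M$, together with an elementary embedding $j\colon M\to N$ having critical point $\kappa$, with $j(\kappa)>\max\{\theta,\alpha\}$ weakly compact in $V$ and $V_{j(\kappa)}\elesub N$. The conditions $V_\theta\subseteq N$ and $j(\kappa)>\max\{\theta,\alpha\}$ are then immediate, and the crucial remaining point is that $N\models\kappa$ is strongly unfoldable of every degree $\beta<\alpha$. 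I would verify this by observing that, because $j(\kappa)$ is inaccessible in $V$, any embedding $j''\colon M''\to N''$ witnessing strong $\theta''$-unfoldability of $\kappa$ of degree $\beta<\alpha$ for $\theta''<j(\kappa)$ can be chosen with $N''$ of rank below $j(\kappa)$, placing it inside $V_{j(\kappa)}$. Hence $V_{j(\kappa)}$ already sees $\kappa$ to be strongly unfoldable of every degree $\beta<\alpha$, and the elementarity $V_{j(\kappa)}\elesub N$ transfers this to $N$.

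For the claim that $\kappa$ is a stationary limit of cardinals that are strongly unfoldable of every ordinal degree, I would start with any club $C\subseteq\kappa$ and apply the same embedding characterization with $A$ coding $C$. Since $j$ has critical point $\kappa$, one has $j(C)\cap\kappa=C$, and since $j(C)$ is a club in $j(\kappa)$ in $N$ with $\kappa=\sup C$ a limit point, closure forces $\kappa\in j(C)$. By the preceding induction, $N\models\kappa$ is strongly unfoldable of every ordinal degree, and so $N$ satisfies the existential assertion that some ordinal in $j(C)$ is strongly unfoldable of every ordinal degree. By elementarity of $j$, there is $\delta\in C$ with $\delta<\kappa$ that $M$ believes is strongly unfoldable of every degree, and the relations $V_\kappa\elesub M$ together with $H_{\delta^+}^V\subseteq V_\kappa$ guarantee that the $M$-witnesses are genuine witnesses in $V$, producing the desired $\delta\in C$ that is strongly unfoldable of every ordinal degree in $V$. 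Iterating this reflection inside the embedding argument yields the tower of stationary limits captured by the phrase ``and so on.''

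The principal obstacle is the verification in the inductive step that $N$ sees $\kappa$ as strongly unfoldable of every lower degree; this hinges on the bounded-rank observation for the relevant witnesses, which in turn relies crucially on the inaccessibility of $j(\kappa)$ in $V$ and on the strong elementarity $V_{j(\kappa)}\elesub N$ supplied by Theorem~\ref{T.EmbeddingCharacterizations}(4). A bare superstrong-style closure $V_{j(\kappa)}\subseteq N$ would not by itself transfer the $\Pi_2$ unfoldability assertion about $\kappa$ from $V_{j(\kappa)}$ up to $N$.
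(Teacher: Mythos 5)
Your main induction is essentially sound, and it takes a mildly different route from the paper's at the one delicate point. Where the paper arranges the domain $M$ so that $M\satisfies$ ``$\kappa$ is strongly unfoldable'' (hence $\Sigma_2$-reflecting), so that by elementarity $j(\kappa)$ is $\Sigma_2$-reflecting in $N$ and the failure of full strong unfoldability of degree $\beta$ in $N$ would reflect into $V_{j(\kappa)}$, you instead invoke statement (4) of theorem~\ref{T.EmbeddingCharacterizations} and use the full elementarity $V_{j(\kappa)}\elesub N$ to push ``$\kappa$ is strongly unfoldable of every degree $\beta<\alpha$'' from $V_{j(\kappa)}$ up to $N$; since (4) really does provide $V_{j(\kappa)}\elesub N$ (thanks to $V_\kappa\elesub M$), and since the witnesses for $\theta'',\beta<j(\kappa)$ can be taken of size $\max\{\beth_{\theta''},\beta,\kappa\}<j(\kappa)$ exactly as in the paper, this transfer is legitimate and arguably cleaner. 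One small slip: you cannot in general demand $\alpha\in M$, since $\alpha$ may exceed $\kappa^\plus$ while a $\kappa$-model is transitive of size $\kappa$; but the definition only requires $\alpha\in N$, which is automatic from $\alpha<j(\kappa)\in N$ with $N$ transitive.

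The stationarity argument, however, has a genuine gap at the final transfer to $V$. After pulling the existential statement back through $j$, you obtain $\delta\in C$ such that $M$ believes $\delta$ is strongly unfoldable of every ordinal degree, and you claim that $V_\kappa\elesub M$ together with $H_{\delta^+}\of V_\kappa$ makes the $M$-witnesses genuine witnesses in $V$. This does not work: the ordinals of $M$ are bounded below $\kappa^\plus$, so $M$ cannot even speak about most targets $\theta$ and degrees $\beta$ of $V$, and for $\theta\geq\kappa$ the set $M$ computes as $V_\theta$ is not the real $V_\theta$, so $M$-witnesses at those levels need not be genuine. The missing ingredient is the $\Sigma_2$-reflecting property of $\kappa$, which the paper records just before the theorem (every strongly uplifting, indeed every pseudo-uplifting, cardinal is $\Sigma_2$-reflecting): since for $\delta<\kappa$ the witnesses to $\theta$-strong unfoldability of degree $\beta$ have size $\max\{\beth_\theta,\beta,\delta\}$, the assertion ``$\delta$ is strongly unfoldable of every ordinal degree'' is suitably $\Pi_2$, so it holds in $V$ if and only if it holds in $V_\kappa$ (using $V_\kappa\elesub_{\Sigma_2}V$). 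What elementarity of $j$ actually certifies is the $V_\kappa$-version (equivalently, the $M$-version restricted to $\theta,\beta<\kappa$, where $V_\kappa\elesub M$ and $H_\kappa^M=V_\kappa$ do make $M$ correct), and $\Sigma_2$-reflection then upgrades this to the full statement in $V$. With that repair in place, the rest of your reflection argument ($j(C)\intersect\kappa=C$, $\kappa\in j(C)$, and $N\satisfies$ ``$\kappa$ is strongly unfoldable of every ordinal degree'' via $V_{j(\kappa)}\elesub N$) goes through and matches the paper's intent.
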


\begin{proof}
Suppose that $\kappa$ is strongly uplifting, and suppose inductively that $\kappa$ is strongly unfoldable of every ordinal degree $\beta$ below $\alpha$. Since $\kappa$ is strongly unfoldable by theorem~\ref{T.EmbeddingCharacterizations}, we may find (by collapsing a suitable elementary substructure of some large $V_\eta$ when $\eta$ is inaccessible) for any $A\of\kappa$ a $\kappa$-model $M\satisfies\ZFC$ with $A\in M$ such that $M\satisfies\kappa$ is strongly unfoldable, and in particular, such that $\kappa$ is $\Sigma_2$-reflecting in $M$. Since $\kappa$ is strongly uplifting, we may find by theorem~\ref{T.EmbeddingCharacterizations} statement (5) an elementary embedding $j:M\to N$ such that $j(\kappa)$ is inaccessible, $j(\kappa)>\alpha$ and $V_{j(\kappa)}\of N$.

For every ordinal $\theta<j(\kappa)$, we claim that $\kappa$ is $\theta$-strongly unfoldable in $N$ of every degree $\beta<\alpha$. The reason is simply that this holds in $V$ and is witnessed by extender embeddings of size $\max\{\beth_{\theta},\alpha,\kappa\}$, which are therefore inside $V_{j(\kappa)}$ and hence in $N$. Since furthermore $j(\kappa)$ is $\Sigma_2$-reflecting in $N$, this means that any counterexample to strong unfoldability would reflect below
$j(\kappa)$ and so $\kappa$ is fully strongly unfoldable of every ordinal degree $\beta$ below $\alpha$ in $N$. Thus, $\kappa$ is strongly unfoldable in $V$ of degree $\alpha$, and the proof is complete by induction on $\alpha$.

For the second claim, consider any club $C\of\kappa$ and ensure also that $C\in M$ in the argument above. The argument shows that $\kappa$ is $\ltjkappa$-strongly unfoldable of every ordinal degree $\alpha<j(\kappa)$ in $N$, and consequently it is  strongly unfoldable of every ordinal degree in $N$. Since furthermore $\kappa\in j(C)$, this means that $\kappa$ must be a stationary limit of such cardinals in $V$, as there can be no club $C\of\kappa$ containing no such cardinals.
\end{proof}

Thus, the strongly uplifting cardinals subsume the entire hierarchy of degrees of strong unfoldability.

Having now provided a strong lower bound, let us turn to the question of an upper bound. Recall that a cardinal $\kappa$ is \emph{subtle} if for any closed unbounded set $C\of \kappa$ and any sequence $\<A_\alpha\st \alpha\in C>$ with $A_\alpha\of\alpha$, there is a pair of ordinals $\alpha<\beta$ in $C$ with $A_\alpha=A_\beta\intersect \alpha$. It is not difficult to see that every subtle cardinal is necessarily inaccessible. Subtle cardinals need not themselves be unfoldable (see~\cite[Prop 2.4]{Villaveces1998:ChainsOfEndElementaryExtensionsOfModelsOfSetTheory}), and so they need not be strongly uplifting.

\begin{theorem}\label{T.SubtleUpperBound}
 If $\delta$ is a subtle cardinal, then the set of cardinals $\kappa$ below $\delta$ that are strongly uplifting in $V_\delta$ is stationary.\end{theorem}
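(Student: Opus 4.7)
The plan is to argue by contradiction. Suppose the set $S=\{\kappa<\delta:\kappa\text{ is strongly uplifting in }V_\delta\}$ fails to be stationary and fix a club $C\of\delta$ disjoint from $S$. Since $\delta$ is subtle and hence inaccessible, $V_\delta\satisfies\ZFC$, so I would fix in $V_\delta$ a well-ordering $R$ of $V_\delta$ of order type $\delta$ with induced bijection $g:V_\delta\to\delta$, and use $g$ together with a definable pairing function on $\delta$ to build a uniform coding $c:\omega\times V_\delta^{\ltomega}\to\delta$ of formula-with-parameter pairs in the language $\{{\in},\dot B\}$. Thin $C$ to a club $C_0\of C$ of $\beth$-fixed points $\alpha$ with $V_\alpha\elesub V_\delta$ for which $g$ restricts to a bijection $V_\alpha\to\alpha$ and $c$ restricts to a bijection $\omega\times V_\alpha^{\ltomega}\to\alpha$.

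Next, for each $\alpha\in C_0$, the hypothesis $\alpha\notin S$ together with the $V_\delta$-form of theorem~\ref{T.ExtensionCharacterizations} supplies some $B\of V_\alpha$ for which
\[T(B)=\{\gamma<\delta:\exists B^*\of V_\gamma,\ \<V_\alpha,{\in},B>\elesub\<V_\gamma,{\in},B^*>\text{ properly}\}\]
is bounded below $\delta$. Let $B_\alpha$ be the $R$-least such witness and set $\eta_\alpha=\sup T(B_\alpha)+1<\delta$. Now form the further refined club $C^*=\{\alpha\in C_0:\eta_\beta<\alpha\text{ for every }\beta\in C_0\intersect\alpha\}$ of closure points of the function $\beta\mapsto\eta_\beta$; this is club in $\delta$ by the regularity of $\delta$. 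This closure refinement is the key device of the proof, since along $C^*$ every element strictly overshoots $\eta_\beta$ for every smaller $\beta\in C_0$.

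For each $\alpha\in C^*$, now define
\[A_\alpha=\{c(\phi,\vec a):\vec a\in V_\alpha^{\ltomega}\text{ and }\<V_\alpha,{\in},B_\alpha>\satisfies\phi(\vec a)\}\of\alpha,\]
the coded complete diagram of $\<V_\alpha,{\in},B_\alpha>$ with parameters from $V_\alpha$. Apply the subtlety of $\delta$ to the sequence $\<A_\alpha:\alpha\in C^*>$ to obtain $\alpha<\beta$ in $C^*$ with $A_\alpha=A_\beta\intersect\alpha$. By uniformity of the coding, this coincidence translates into the equivalence $\<V_\alpha,{\in},B_\alpha>\satisfies\phi(\vec a)$ iff $\<V_\beta,{\in},B_\beta>\satisfies\phi(\vec a)$ for every formula $\phi$ and every $\vec a\in V_\alpha^{\ltomega}$; combined with $V_\alpha\of V_\beta$, the Tarski--Vaught test yields the proper elementary extension $\<V_\alpha,{\in},B_\alpha>\elesub\<V_\beta,{\in},B_\beta>$. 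This places $\beta\in T(B_\alpha)$ and hence $\beta<\eta_\alpha$, while the closure property of $C^*$ forces $\eta_\alpha<\beta$: contradiction.

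The main obstacle is that subtlety supplies only a single matched pair $\alpha<\beta$, and without preparation there is no reason the produced $\beta$ should lie above the non-uplifting threshold $\eta_\alpha$; the closure-point construction $C^*$ is the device designed to ensure exactly that. The remaining ingredient, arranging the uniform coding $c$ so that the raw set-theoretic coincidence $A_\alpha=A_\beta\intersect\alpha$ upgrades to full elementarity of the predicate-expanded structures, is routine once $V_\alpha\elesub V_\delta$ and the $\beth$-fixed point conditions are in force along $C_0$.
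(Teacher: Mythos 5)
Your proof is correct and follows essentially the same route as the paper's: assume the set is nonstationary, use the extension characterization (theorem~\ref{T.ExtensionCharacterizations}) inside $V_\delta$ to attach to each club element a predicate whose proper extension heights are bounded, thin the club so that later club elements exceed these bounds, code the elementary diagrams as subsets of $\alpha$, and apply subtlety to produce a coherent pair $\alpha<\beta$ yielding a forbidden proper elementary extension. Your closure-point refinement $C^*$ and the explicit uniform coding are exactly the paper's ``thin so that $\theta$ is below the next club element'' and ``code the diagram in some uniform canonical manner,'' just spelled out in more detail.
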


\begin{proof} The argument is essentially related to~\cite[theorem 2.2]{Villaveces1998:ChainsOfEndElementaryExtensionsOfModelsOfSetTheory} and also~\cite[theorem 3]{DzamonjaHamkins2006:DiamondCanFail}. Suppose that $\delta$ is subtle and the set of cardinals below $\delta$ that are strongly uplifting in $V_\delta$ is not stationary. Then there is a closed unbounded set $C\of\delta$ containing no such cardinals. Since each cardinal in $C$ is not strongly uplifting in $V_\delta$, it follows from statement~(1) of theorem~\ref{T.ExtensionCharacterizations} applied in $V_\delta$ that for each $\kappa\in C$, there is some least $\theta<\delta$ and some subset $A_\kappa \of\kappa$, such that $\<V_\kappa,{\in},A_\kappa>$ has no proper elementary extension of the form $\<V_\gamma,{\in},A^*>$ for any $\gamma$ with $\theta<\gamma<\delta$. By thinning the club $C$, we may assume that $\theta$ is less than the next element of $C$ above $\kappa$, and also that $\kappa$ is a $\beth$-fixed point. Since $V_\kappa$ has size $\kappa$, let $B_\kappa\of\kappa$ be a set that codes the elementary diagram of the structure $\<V_\kappa,{\in},A_\kappa>$ in some uniform canonical manner. Since $\delta$ is subtle, there is a pair $\kappa<\eta$ in $C$ with $B_\kappa=B_\eta\intersect\kappa$. Since $B_\kappa$ and $B_\eta$ code the corresponding elementary diagrams, it follows that those structures agree on their truths below $\kappa$, and so $\<V_\kappa,{\in},A_\kappa>\elesub\<V_\eta,{\in},A_\eta>$. This contradicts the assumption that $\<V_\kappa,{\in},A_\kappa>$ has no proper elementary extension above $\theta$, which is less than the next element of $C$ and therefore less than $\eta$. So the conclusion of the theorem must hold, as desired.
\end{proof}

\begin{theorem}\label{T.ZeroSharp}
 If $0^\sharp$ exists, then every Silver indiscernible is strongly uplifting in $L$.
\end{theorem}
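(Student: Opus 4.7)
The plan is to apply the extension characterization of Theorem~\ref{T.ExtensionCharacterizations} inside $L$. Fix a Silver indiscernible $\kappa$, an ordinal $\theta$, and a set $A\in L$ with $A\of L_\kappa=V_\kappa^L$; I will produce an $L$-inaccessible cardinal $\gamma\geq\theta$ and a set $A^*\in L$ with $A^*\of L_\gamma$ such that $\<L_\kappa,{\in},A>\elesub\<L_\gamma,{\in},A^*>$ is a proper elementary extension.

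First I would pick any Silver indiscernible $\gamma>\max(\kappa,\theta)$, which exists because the class $I$ of Silver indiscernibles is a proper class, and any such $\gamma$ is inaccessible (indeed much more) in $L$. Next, I would use the order-indiscernibility of $I$ in $L$ to build an elementary embedding $j\colon L\to L$: choose an order-preserving injection $f\colon I\to I$ that is the identity on $I\intersect\kappa$, sends $\kappa$ to $\gamma$, and maps $I\intersect(\kappa,\infty)$ into $I\intersect(\gamma,\infty)$; its induced action $j(t^L(\vec i))=t^L(f(\vec i))$ on Skolem terms yields an elementary $j\colon L\to L$ with critical point $\kappa$ and $j(\kappa)=\gamma$.

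The crucial step is showing that $j$ fixes $L_\kappa$ pointwise. For this I would invoke the standard consequence of $0^\sharp$ that $L_\kappa$ coincides with the Skolem hull in $L$ of $I\intersect\kappa$ under the canonical definable Skolem functions; since $j$ fixes every element of $I\intersect\kappa$ by choice of $f$, elementarity forces $j$ to fix every Skolem term built from such indiscernibles, and hence every element of $L_\kappa$. Setting $A^*=j(A)\in L$, the relations $A\of L_\kappa$ and $j\restrict L_\kappa=\id$ give $A^*\intersect L_\kappa=A$, while $j(L_\kappa)=L_{j(\kappa)}=L_\gamma$ gives $A^*\of L_\gamma$. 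The elementarity $\<L_\kappa,{\in},A>\elesub\<L_\gamma,{\in},A^*>$ then transfers through $j$ via the internal satisfaction predicate: for any formula $\varphi(\vec x)$ in the language with a predicate for $A$ and parameters $\vec a\in L_\kappa$ (fixed by $j$), the assertion ``$\<L_\kappa,{\in},A>\satisfies\varphi(\vec a)$'' is carried by elementarity of $j$ on $L$ to ``$\<L_\gamma,{\in},A^*>\satisfies\varphi(\vec a)$''. The extension is proper since $L_\kappa\ofneq L_\gamma$, so Theorem~\ref{T.ExtensionCharacterizations} applied in $L$ verifies strong $(\theta+1)$-upliftingness of $\kappa$ in $L$, and since $\theta$ was arbitrary, $\kappa$ is strongly uplifting in $L$.

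The main obstacle will be justifying that $j\restrict L_\kappa=\id$; once one invokes the standard characterization of $L_\kappa$ as the Skolem hull of $I\intersect\kappa$ at a Silver indiscernible $\kappa$, the rest is routine manipulation of the induced elementary embedding together with the extension characterization of Theorem~\ref{T.ExtensionCharacterizations}.
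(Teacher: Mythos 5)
Your overall strategy is the same as the paper's: shift the class $I$ of Silver indiscernibles by an order-preserving map $f$ fixing $I\intersect\kappa$ and sending $\kappa$ to a large indiscernible $\gamma$, obtain the induced elementary embedding $j:L\to L$, and take $A^*=j(A)$. But the step you yourself flag as crucial rests on a false lemma: it is \emph{not} true for every Silver indiscernible $\kappa$ that $L_\kappa$ is the Skolem hull in $L$ of $I\intersect\kappa$. That identity holds exactly when $\kappa$ is a limit point of $I$. If $\kappa$ is the least indiscernible, or a successor element of $I$, then $\eta=\sup(I\intersect\kappa)<\kappa$, and since $\kappa$ is inaccessible in $L$ there are club many $\beta$ with $\eta<\beta<\kappa$ and $L_\beta\elesub L_\kappa\elesub L$; the parameters $I\intersect\kappa$ all lie in such an $L_\beta$, and the hull (under the canonical definable Skolem functions of $L$) of parameters from an elementary submodel of $L$ stays inside that submodel, so $\mathrm{Hull}^L(I\intersect\kappa)\of L_\beta\ofneq L_\kappa$. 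In particular $L_{\kappa_0}$ is not pointwise definable for the least indiscernible $\kappa_0$, and your justification that $j\restrict L_\kappa=\id$ breaks down for precisely these indiscernibles, which the theorem also claims.

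The conclusion you need is nevertheless true; the correct justification is the remarkability of $0^\sharp$ rather than the hull characterization. Every ordinal $\alpha<\kappa$ can be written $t^L(\vec\imath,\vec m)$ with $\vec\imath\in I\intersect\kappa$ and $\vec m\in I\setminus\kappa$, and since the value lies below $\min\vec m$, remarkability makes it independent of the choice of $\vec m$ above $\vec\imath$; hence $j(\alpha)=t^L(\vec\imath,f(\vec m))=t^L(\vec\imath,\vec m)=\alpha$, so $j\restrict\kappa=\id$, and then $j\restrict L_\kappa=\id$ follows (each $x\in L_\kappa$ is the $\alpha$-th element of the canonical well-order $<_L$ for some $\alpha<\kappa$, because $\kappa$ is inaccessible in $L$). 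This is exactly the standard fact the paper invokes directly: for indiscernibles $\kappa<\delta$ there is $j:L\to L$ with $j\restrict\kappa=\id$ and $j(\kappa)=\delta$. With that repair --- or restricted to limit points of $I$, where your hull lemma is correct, its proof itself passing through remarkability --- the remainder of your argument (taking $A^*=j(A)$, verifying $\<L_\kappa,{\in},A>\elesub\<L_\gamma,{\in},A^*>$, and citing the extension characterization in $L$) is fine and matches the paper's proof.
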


\begin{proof}
If $\kappa$ is any Silver indiscernible, then for any Silver indiscernible $\delta$ above $\kappa$, there is an elementary embedding $j:L\to L$ with $j(\kappa)=\delta$ and $j\restrict\kappa=\id$. If $A\of\kappa$ is any set in $L$, then $\<L_\kappa,{\in},A>\elesub\<L_\delta,{\in},j(A)>$, witnessing the desired instance of strong uplifting.
\end{proof}

So for example, if there is a Ramsey cardinal, then every uncountable cardinal of $V$ is strongly uplifting in $L$.

Let us say that a cardinal $\kappa$ is {\df unfoldable with cardinal targets}, if for every $A\in H_{\kappa^+}$ there is a $\kappa$-model $M$ with $A\in M$, an ordinal $\theta$, a transitive set $N$ and an elementary embedding $j:M\to N$ with critical point $\kappa$, such that $j(\kappa)$ is a cardinal (in $V$) and $j(\kappa)\geq\theta$.

\begin{theorem}
 In the constructible universe $L$, $\kappa$ is strongly uplifting if and only if it is unfoldable with cardinal targets.
\end{theorem}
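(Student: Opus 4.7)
The plan is to prove the two directions separately. The forward direction is essentially immediate: if $\kappa$ is strongly uplifting in~$L$, then working entirely inside $L$, Theorem~\ref{T.EmbeddingCharacterizations}(4) supplies, for any $A\in H_{\kappa^\plus}^L$ and any $\theta$, a $\kappa$-model $M\in L$ and an elementary embedding $j:M\to N$ in $L$ with $A\in M$, critical point $\kappa$, $j(\kappa)>\theta$, and $j(\kappa)$ inaccessible in~$L$; since $j(\kappa)$ is in particular a cardinal in~$L$, this is exactly unfoldability with cardinal targets in~$L$.

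For the backward direction, I would suppose $\kappa$ is unfoldable with cardinal targets in~$L$ and verify the extension property of Theorem~\ref{T.ExtensionCharacterizations}(1) for $\kappa$ in~$L$. Given $A\of\kappa$ with $A\in L$ and any ordinal $\theta$, apply the hypothesis to $A$ (with $\theta+1$ in place of $\theta$, to secure strict inequality) to obtain a $\kappa$-model $M\in L$ with $A\in M$, a transitive $N\in L$, and an elementary embedding $j:M\to N$ with critical point $\kappa$ and $j(\kappa)>\theta$ a cardinal in~$L$. I would then set $\gamma=j(\kappa)$ and $A^*=j(A)$ and aim to show that $\<L_\kappa,{\in},A>\elesub\<L_{j(\kappa)},{\in},j(A)>$ is the desired proper elementary extension. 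Two supporting observations are needed: first, $L_\kappa\in M$ since $\kappa\in M$ and $M\satisfies\ZFC$ constructs $L_\kappa$ internally, hence $L_\kappa\of M$ by transitivity and $j\restrict L_\kappa=\id$ by rank induction using the critical point; second, $j(L_\kappa)=L_{j(\kappa)}^N=L_{j(\kappa)}$ by absoluteness of the constructible hierarchy for transitive models.

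With these in hand, for any formula $\varphi$ and any parameters from $L_\kappa$, $M$ correctly computes $\<L_\kappa,{\in},A>\satisfies\varphi$ by absoluteness of satisfaction for sets, and by elementarity of $j$ this is equivalent to $N$'s computation of $\<L_{j(\kappa)},{\in},j(A)>\satisfies\varphi$, which is equally correct; properness follows from $\kappa<j(\kappa)$, and $\gamma>\theta$ holds as required. The main subtlety I expect to navigate is handling the predicate-expanded structure $\<L_\kappa,{\in},A>$ rather than $L_\kappa$ alone, but this dissolves once one notes that $A\in M$, so the expanded satisfaction predicate is itself a set definable in $M$ that $j$ transports to the expanded satisfaction predicate for $\<L_{j(\kappa)},{\in},j(A)>$ in~$N$.
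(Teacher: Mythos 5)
Your forward direction is fine and is essentially the paper's. The backward direction, however, has a genuine gap at exactly the point where the hypothesis that $j(\kappa)$ is a \emph{cardinal} has to do its work. Statement (1) of theorem~\ref{T.ExtensionCharacterizations} asks for a proper elementary extension of $\<V_\kappa,{\in},A>$ into a structure of the form $\<V_\gamma,{\in},A^*>$, where $V_\gamma$ is a genuine rank-initial segment (of $L$, since we work under $V=L$). What you produce is $\<L_\kappa,{\in},A>\elesub\<L_{j(\kappa)},{\in},j(A)>$, and you never check that $L_{j(\kappa)}$ really is $V_{j(\kappa)}^L$ (nor that $L_\kappa=V_\kappa$, which needs the inaccessibility of $\kappa$, an easy but unmentioned consequence of the embedding hypothesis). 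If $j(\kappa)$ is not a $\beth$-fixed point of $L$, then $L_{j(\kappa)}$ is a proper subset of $V_{j(\kappa)}^L$ and your extension does not verify statement (1). Note that nothing in your argument actually uses that $j(\kappa)$ is a cardinal rather than an arbitrary ordinal; as written it would show that every unfoldable cardinal is strongly uplifting in $L$, which is impossible, since that would make unfoldability and strong upliftingness equiconsistent, contradicting the strict jump in consistency strength coming from theorem~\ref{T.ConsistLowerBound} together with Villaveces' result that unfoldability is equiconsistent with strong unfoldability. (A minor further point: the definition of unfoldability with cardinal targets provides only a $\kappa$-model, so $M\satisfies\ZFC^-$ rather than full $\ZFC$; this is harmless, as $\ZFC^-$ suffices to build $L_\kappa$ inside $M$.)

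The missing step is precisely the paper's key move: since $j(\kappa)$ is a cardinal of $L$ and $V=L$, the structure $L_{j(\kappa)}$ equals $H_{j(\kappa)}$ and therefore computes cardinals and the values $\beth_\alpha$ for $\alpha<j(\kappa)$ correctly; since $\kappa$ is inaccessible, $L_\kappa$ satisfies that $\beth_\alpha$ exists for every ordinal $\alpha$, and so the elementarity $L_\kappa\elesub L_{j(\kappa)}$ that you have already established shows that $j(\kappa)$ is a $\beth$-fixed point of $L$. Hence $L_{j(\kappa)}=V_{j(\kappa)}^L$ (and $L_\kappa=V_\kappa$), and only then does your extension witness statement (1) of theorem~\ref{T.ExtensionCharacterizations}. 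Alternatively, you could avoid the rank hierarchy by invoking the $H_\kappa$-form of the characterization mentioned immediately after theorem~\ref{T.ExtensionCharacterizations}, since under $V=L$ your extension is exactly of the form $\<H_\kappa,{\in},A>\elesub\<H_\gamma,{\in},A^*>$ with $\gamma=j(\kappa)$ an arbitrarily large cardinal; but on either route the cardinality of the target must be used explicitly, and your proposal never does so.
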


\begin{proof}
The forward implication holds whether or not we are in $L$, since if $\kappa$ is strongly uplifting, then by theorem~\ref{T.EmbeddingCharacterizations} we get for any $A\in H_{\kappa^+}$ a $\kappa$-model $M$ with $A\in M$ and an embedding $j:M\to N$ with critical point $\kappa$ and $j(\kappa)$ weakly compact and more, as large as desired; and so $\kappa$ is unfoldable with the desired targets. Conversely, assume that $V=L$ and $\kappa$ is unfoldable with cardinal targets. For any $A\of\kappa$ we may find a $\kappa$-model $M$ with $A\in M$, and an embedding $j:M\to N$ with $j(\kappa)$ a cardinal in $V$ and as large as desired. Since $j$ fixes everything of rank below $\kappa$, it follows by elementarity that $\<L_\kappa,{\in},A>\elesub \<L_{j(\kappa)},{\in},j(A)>$. We have $L_\kappa=V_\kappa$ since $\kappa$ is inaccessible. Since $j(\kappa)$ is a cardinal, it follows that $L_{j(\kappa)}$ correctly computes all cardinals and $\beth_\alpha$ below $j(\kappa)$, and so by elementarity $L_\kappa\elesub L_{j(\kappa)}$ it follows that $j(\kappa)$ is a $\beth$-fixed point in $L$. Thus, $L_{j(\kappa)}=V_{j(\kappa)}^L$, thereby witnessing the desired instance of strong upliftingness for $\kappa$, using theorem~\ref{T.ExtensionCharacterizations} statement~(1).
\end{proof}

Let us now turn to the Menas and Laver function concepts for the strongly uplifting cardinals. Define that $f:\kappa\to\kappa$ is a {\df Menas} function for a strongly uplifting cardinal $\kappa$, if for every set $A\of\kappa$ and every $\theta$, there is a proper elementary extension $\<V_\kappa,{\in},A,f>\elesub\<V_\gamma,{\in},A^*,f^*>$, where $\gamma>\theta$ is inaccessible and $f^*(\kappa)\geq\theta$.

\begin{theorem}\label{T.MenasFunction}
 Every strongly uplifting cardinal $\kappa$ has a strongly uplifting Menas function.
\end{theorem}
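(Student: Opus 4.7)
My plan is to construct $f$ by a transfinite recursion in the style of the standard Menas construction. Define $f(\alpha)$ to be the least ordinal $\beta$ for which there exists some $A\of\alpha$ witnessing that $\<V_\alpha,\in,A,f\restrict\alpha>$ admits no proper elementary extension $\<V_\gamma,\in,A^*,f^*>$ with $\gamma>\beta$ inaccessible and $f^*(\alpha)\geq\beta$; if no such $\beta$ exists, set $f(\alpha)=0$. Interpreted $V_\kappa$-internally, this recursion produces $f\colon\kappa\to\kappa$, and by elementarity any proper elementary extension $\<V_\kappa,\in,A,f>\elesub\<V_\gamma,\in,A^*,f^*>$ is forced to have $f^*\restrict\kappa=f$, with $f^*(\kappa)$ determined by the very same recursion formula computed inside $V_\gamma$.

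To show $f$ is Menas, I argue by contradiction. Suppose $(A_0,\theta_0)$ witnesses failure, meaning no proper elementary extension of $\<V_\kappa,\in,A_0,f>$ has $\gamma>\theta_0$ inaccessible together with $f^*(\kappa)\geq\theta_0$, and choose $\theta_0$ minimal across all failing pairs. Then for every $B\of\kappa$ and every $\beta<\theta_0$, this minimality provides some least inaccessible cardinal $\gamma_{B,\beta}$ hosting a suitable proper elementary extension of $\<V_\kappa,\in,B,f>$ with $f^{**}(\kappa)\geq\beta$. The key observation is that the family of pairs $(B,\beta)$ with $B\of\kappa$ and $\beta<\theta_0$ is a set, so by replacement $\Gamma=\sup\{\gamma_{B,\beta}:B\of\kappa,\,\beta<\theta_0\}$ is a single ordinal.

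Now apply the strong $(\Gamma+1)$-uplifting of $\kappa$ to $A_0$ (coded together with $f$ as a single subset of $V_\kappa$) to obtain a proper elementary extension $\<V_\kappa,\in,A_0,f>\elesub\<V_\delta,\in,A_0^*,f^*>$ with $\delta>\Gamma$ inaccessible. I claim that $f^*(\kappa)\geq\theta_0$, directly contradicting the failure at $(A_0,\theta_0)$. Indeed, the recursion inside $V_\delta$ declares $f^*(\kappa)$ to be the least $\beta$ for which some $B\of\kappa$ witnesses that no extension of height $\gamma<\delta$ achieves $f^{**}(\kappa)\geq\beta$; but for every such $B$ and every $\beta<\theta_0$, the extension at $\gamma_{B,\beta}\leq\Gamma<\delta$ is fully visible inside $V_\delta$, since first-order elementarity of set-size structures and inaccessibility of ordinals below $\delta$ are both absolute between $V$ and $V_\delta$.

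The main obstacle I anticipate is the careful handling of the two interpretations of the recursion: on the one hand we need $f\colon\kappa\to\kappa$, as prescribed by the $V_\kappa$-internal reading of the formula, while on the other hand we need $f^*(\kappa)$ in the extension to range freely up to $\delta$, as delivered by the $V_\delta$-internal reading. The whole argument ultimately hinges on the elementarity of the extension together with the observation that the bounding family $(B,\beta)$ is a set, which forces $\Gamma$ to be a genuine ordinal and so permits strong uplifting to be invoked above it.
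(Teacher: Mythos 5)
Your argument is correct, but it takes a genuinely different route from the paper's. The paper does not use a self-referential recursion: it takes $f$ to be the failure-of-strong-uplifting function, $f(\delta)=\theta$ when $\delta$ is strongly $\beta$-uplifting for every $\beta<\theta$ but not strongly $\theta$-uplifting, and argues directly rather than by contradiction: given $A$ and $\theta$, it picks $\lambda>\theta$ with $V_\lambda\satisfies\kappa$ is strongly uplifting, lets $\eta$ be the least inaccessible above $\lambda$ admitting an extension $\<V_\kappa,{\in},A,f>\elesub\<V_\eta,{\in},A^*,f^*>$, and observes that minimality of $\eta$ forces $\kappa$ not to be strongly uplifting in $V_\eta$, while the choice of $\lambda$ gives $V_\eta\satisfies\kappa$ is strongly ${<}\lambda$-uplifting, whence $f^*(\kappa)\geq\lambda\geq\theta$. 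Your version instead diagonalizes against the Menas property itself, takes a minimal failing $\theta_0$, and uses replacement to bound all witnessing heights by $\Gamma$ before uplifting above $\Gamma$; this makes the closure argument explicit (the paper's choice of $\lambda$ quietly relies on a similar replacement argument to see that such $\lambda$ exist), at the cost of a proof-by-contradiction structure and the extra bookkeeping of the recursion. Both proofs rest on the same two pillars you identify: definability of $f$ over $V_\kappa$, so that $f^*$ is the corresponding function of the target structure, and absoluteness to $V_\delta$ (respectively $V_\eta$) of elementarity and inaccessibility for the witnessing extensions. One small point to tighten: because your definition has an else clause setting the value to $0$ when no failure ordinal exists, the inference that $f^*(\kappa)\geq\theta_0$ needs the remark that, under the failure hypothesis, $\theta_0$ itself is a failure point inside $V_\delta$ (witnessed by $A_0$, since any witnessing extension of height below $\delta$ found in $V_\delta$ would be a genuine witness in $V$), so the main clause of the recursion does fire and indeed $f^*(\kappa)=\theta_0$; as written, the degenerate value $f^*(\kappa)=0$ is not excluded, although that case too would immediately contradict the assumed failure.
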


\begin{proof}
As in~\cite[thm~13]{HamkinsJohnstone2014:ResurrectionAxiomsAndUpliftingCardinals}, we may simply use the failure-of-strong-uplifting function, namely, the function defined by $f(\delta)=\theta$, if $\delta$ is not strongly $\theta$-uplifting, but it is strongly $\beta$-uplifting for every $\beta<\theta$. Suppose that $\kappa$ is strongly uplifting
and consider any ordinal $\theta$ and any $A\of\kappa$. Let $\lambda$ be any ordinal above $\theta$ such that $V_\lambda\satisfies\kappa$ is strongly uplifting\footnote{Note that the various characterizations of strongly uplifting cardinals as in theorems~\ref{T.ExtensionCharacterizations} and~\ref{T.EmbeddingCharacterizations} are all equivalent for such models $V_\lambda$, even though $V_\lambda$ need not satisfy all of $\ZFC$.\label{footnote}}\,, and let $\eta$ be the smallest inaccessible cardinal above $\lambda$ for which there is an elementary extension $\<V_\kappa,{\in},A,f>\elesub\<V_\eta,{\in},A^*,f^*>$. (Note that we needn't actually include $f$ in the language, since it is definable, and $f^*$ will be similarly defined in $V_\eta$.) It follows by the minimality of $\eta$ that $\kappa$ is not strongly uplifting in $V_\eta$, but by the choice of $\lambda$, we know that $V_\eta\satisfies\kappa$ is strongly $\ltlambda$-uplifting. It follows that $f^*(\kappa)\geq\lambda$, which is at least $\theta$, and so we have fulfilled the desired Menas property.
\end{proof}

The Menas function concept interacts well with the embedding characterizations of theorem~\ref{T.EmbeddingCharacterizations}, with the result that one can find embeddings $j:M\to N$ as in that theorem for which $j(f)(\kappa)\geq\theta$.

Although the Menas function concept suffices for many applications, including especially the lottery-style forcing iterations we shall use for the equiconsistency in theorem~\ref{T.Equiconsistency}, nevertheless a more refined analysis results in the Laver function concept. Namely, a function $\ell\from\kappa\to V_\kappa$ is a {\df Laver function} for a strongly uplifting cardinal $\kappa$, if for every $A\of\kappa$, every ordinal $\theta$ and every set $x$, there is a proper elementary extension $\<V_\kappa,{\in},A,\ell>\elesub\<V_\gamma,{\in},A^*,\ell^*>$ where $\gamma\geq\theta$ in inaccessible and $\ell^*(\kappa)=x$.  The function $\ell\from\kappa\to V_\kappa$ is merely an {\df $\OD$-anticipating} Laver function, if this property can be achieved at least for $x\in\OD$, and similarly a function $\ell\from\kappa\to\kappa$ is an {\df ordinal-anticipating} Laver function for a strongly uplifting cardinal $\kappa$, if for every $A\of\kappa$ and any two ordinals $\alpha,\theta$, there is a proper elementary extension $\<V_\kappa,{\in},A,\ell>\elesub\<V_\gamma,{\in},A^*,\ell^*>$ where $\gamma\geq\theta$ is inaccessible and $\ell^*(\kappa)=\alpha$.

\begin{theorem}\label{T.LaverFunctions}
 Every strongly uplifting cardinal $\kappa$ has an ordinal-anticipating Laver function $\ell\from\kappa\to\kappa$, and indeed, an $\OD$-anticipating Laver function $\ell\from\kappa\to V_\kappa$. Furthermore, there is such a Laver function that is definable in $\<V_\kappa,{\in}>$.
\end{theorem}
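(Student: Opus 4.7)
The plan is to adapt the least-counterexample construction from the Menas function proof of Theorem~\ref{T.MenasFunction}. Fix a canonical definable and set-like well-ordering $\lhd$ of triples $\<\theta,A,x>$, where $\theta$ is an ordinal, $A\of V_\delta$, and $x$ is either an ordinal (for the ordinal-anticipating case) or an $\OD$ element (for the $\OD$-anticipating case); for set-likeness, one may rank triples first by $\max(\theta,\rank(A),\rank(x))$. Define $\ell$ by transfinite recursion on $\delta<\kappa$: let $\ell(\delta)=x$ where $\<\theta,A,x>$ is the $\lhd$-least triple for which $\<V_\delta,{\in},A,\ell\restrict\delta>$ admits no proper elementary extension $\<V_\gamma,{\in},A^*,\ell^*>$ with $\gamma\geq\theta$ inaccessible and $\ell^*(\delta)=x$; let $\ell(\delta)=0$ if no such triple exists. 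The recursion is expressible by a formula in $\<V_\kappa,{\in}>$, which handles the ``furthermore'' clause.

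For the verification, suppose toward contradiction that $\ell$ fails the anticipation property at $\kappa$, and let $\<\theta_0,A_0,x_0>$ be the $\lhd$-least $V$-counterexample. By $\lhd$-minimality, every $\lhd$-predecessor $\tau=\<\theta,A,x>$ admits some witnessing extension reaching an inaccessible $\gamma(\tau)\geq\theta$ in $V$ with $\ell^*(\kappa)=x$; since $\lhd$ is set-like, the predecessors form a set and $\Lambda:=\sup_{\tau}\gamma(\tau)$ is an ordinal. Now invoke Theorem~\ref{T.ExtensionCharacterizations}(4) with the predicate $A_0$ to produce a proper elementary extension $\<V_\kappa,{\in},A_0>\elesub\<V_\gamma,{\in},A_0^*>$ with $\gamma$ inaccessible and chosen above $\max(\Lambda,\theta_0,\rank(x_0))$. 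Because $\ell$ is definable in $\<V_\kappa,{\in}>$, the corresponding formula defines $\ell^{V_\gamma}$ in $\<V_\gamma,{\in}>$, so $\ell^{V_\gamma}\restrict\kappa=\ell$ by elementarity, and hence $\<V_\kappa,{\in},A_0,\ell>\elesub\<V_\gamma,{\in},A_0^*,\ell^{V_\gamma}>$.

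The crux is to show $\ell^{V_\gamma}(\kappa)=x_0$. On the one hand, $\<\theta_0,A_0,x_0>$ is a $V_\gamma$-counterexample at $\kappa$, for any witnessing $V_\gamma$-extension would itself be a witnessing $V$-extension, contradicting the standing assumption. On the other hand, for every $\lhd$-predecessor $\tau$, the witnessing extension lies below $\Lambda<\gamma$ and is therefore visible inside $V_\gamma$, so $\tau$ is not a $V_\gamma$-counterexample. Thus $\<\theta_0,A_0,x_0>$ is the $\lhd$-least counterexample inside $V_\gamma$, yielding $\ell^{V_\gamma}(\kappa)=x_0$; the elementary extension above then witnesses successful anticipation of $x_0$, contradicting our choice of $\<\theta_0,A_0,x_0>$. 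The principal obstacle is precisely this coordination between the $\lhd$-least $V$-counterexample and the $\lhd$-least $V_\gamma$-counterexample, and it is resolved by gathering the predecessor witnesses into $\Lambda$ and then using the freedom in Theorem~\ref{T.ExtensionCharacterizations}(4) to place $\gamma$ beyond $\Lambda$.
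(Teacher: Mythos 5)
Your construction rests on ``a canonical definable and set-like well-ordering $\lhd$ of triples $\<\theta,A,x>$'' in which the middle coordinate $A$ is an arbitrary subset of $V_\delta$ (and, at the verification stage, an arbitrary $A_0\of\kappa$). No such well-ordering exists in \ZFC: restricting $\lhd$ to triples $\<0,A,0>$ would give a parameter-free definable well-ordering of $P(V_\omega)$, hence of the reals, and likewise of $P(\kappa)$, which is not provable and fails in familiar models (for instance, after adding a single Cohen real, homogeneity shows the new real is not ordinal definable, so no $\OD$ well-ordering of the reals exists). The definability of $\lhd$ is not cosmetic in your argument: it is exactly what licenses the step from $\<V_\kappa,{\in},A_0>\elesub\<V_\gamma,{\in},A_0^*>$ to ``$\ell^{V_\gamma}\restrict\kappa=\ell$ and $\ell^{V_\gamma}(\kappa)=x_0$,'' since $\ell^*$ must be recomputed inside $V_\gamma$ by the same formula; nor can you carry $\lhd$ along as an extra predicate, because a well-ordering of $P(\kappa)$ cannot be coded by a subset of $\kappa$, and subsets of $\kappa$ are all that the strong uplifting property allows you to place into the structures. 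Note also that if such a definable $\lhd$ existed with arbitrary sets $x$ in the third coordinate, the identical argument would produce a full Laver function, which the paper obtains only under $V=\HOD$ and otherwise leaves as an open question---a sign that the posited well-ordering, not the strong upliftingness, is carrying the load. This definability obstruction is precisely why the theorem is stated for ordinal-anticipating and $\OD$-anticipating Laver functions.

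The least-counterexample idea itself can be salvaged, but only by never choosing among the predicates $A$: well-order just the pairs $(\theta,x)$ (ordinals, or $\OD$ sets via their canonical definable enumeration), and set $\ell(\delta)=x$ for the least pair $(\theta,x)$ such that \emph{there exists} $A\of V_\delta$ admitting no appropriate extension with $\ell^*(\delta)=x$. In the verification, take $(\theta_0,x_0)$ least such that some $A_0\of\kappa$ fails, let $\Lambda$ be the supremum of the least witness heights over \emph{all} $A\of\kappa$ and all smaller pairs (a set-sized supremum, since $P(\kappa)$ is a set), and choose $\gamma>\max(\Lambda,\theta_0,\rank(x_0))$ as you do; then $\ell^{V_\gamma}(\kappa)=x_0$ and your contradiction goes through. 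That repaired argument is genuinely different from the paper's proof, which sidesteps the issue by making $\ell(\delta)$ depend only on ordinal data---the order type of the set of $\gamma<\kappa$ with $V_\gamma\satisfies\delta$ strongly uplifting, decoded by G\"odel pairing---and handles the given $A$ in the verification by passing to the least inaccessible $\eta$ above a suitably chosen $\lambda$ admitting an extension for that particular $A$.
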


\begin{proof}
Let us first construct an ordinal-anticipating Laver function. For any cardinal $\delta<\kappa$, consider the set of ordinals $\gamma$ below $\kappa$ for which $V_\gamma\satisfies\delta$ is strongly uplifting. % (see footnote~\ref{footnote})
If this class of ordinals bounded in $\kappa$ and has order type $\xi+1$ for some $\xi$, and if furthermore $\xi=\<\alpha,\beta>$ is the \Godel\ code of a pair of ordinals, then define $\ell(\delta)=\alpha$; otherwise let $\ell(\delta)$ be undefined. Thus, we have defined the function $\ell\from\kappa\to\kappa$.

We claim that this function is an ordinal-anticipating Laver function for the strong upliftingness of $\kappa$. To see this, consider any $A\of\kappa$ and any two ordinals $\alpha,\theta$. Let $\xi=\<\alpha,\theta>$ be the \Godel\ code, which we assume is at least $\theta$ (and we may assume this is at least $\kappa$), and let $\lambda$ be the $(\xi+1)^\th$ ordinal such that $V_\lambda\satisfies\kappa$ is strongly uplifting, and let $\eta$ be the least inaccessible cardinal above $\lambda$ for which there is an extension $\<V_\kappa,{\in},A,\ell>\elesub \<V_\eta,{\in},A^*,\ell^*>$. By the minimality of $\eta$, it follows that $\lambda$ is the largest ordinal below $\eta$ for which $V_\lambda\satisfies\kappa$ is strongly uplifting, and so the set of ordinals $\gamma$ below $\eta$ for which $V_\gamma\satisfies\kappa$ is strongly uplifting has order type
$\xi+1$. This means that $V_\eta\satisfies\ell^*(\kappa)=\alpha$, precisely because it will be using the ordinal $\xi=\<\alpha,\theta>$ when the definition is unraveled. Since $\xi\geq\theta$, we have thereby witnessed the desired instance of the ordinal-anticipating Laver function property.

One may now produce from $\ell$ an $\OD$-anticipating Laver function $\hat\ell\from\kappa\to V_\kappa$, using the same idea as in~\cite[thm~14]{HamkinsJohnstone2014:ResurrectionAxiomsAndUpliftingCardinals}. Let $\hat\ell(\delta)=x$, if $\ell(\delta)=\<\eta,\beta>$ and $x$ is the $\beta^{th}$ ordinal-definable object in $\<V_\eta,{\in}>$. Now, if $x\in\OD$, then $x\in\OD^{V_\eta}$ for some $\eta$, and it is the $\beta^{\rm th}$ ordinal-definable object in $V_\eta$. Since $\ell$ is an ordinal-anticipating Laver function, we may find $\<V_\kappa,{\in},A,\ell>\elesub\<V_\gamma,{\in},A^*,\ell^*>$ for which $\ell^*(\kappa)=\<\eta,\beta>$, and in this case we will have $\hat\ell^*(\kappa)=x$, since $V_\gamma$ will be looking at the $\beta^{\rm th}$ ordinal-definable object of $V_\eta$, which is $x$.
\end{proof}

In particular, if $V=\HOD$, then every strongly uplifting cardinal has a strongly uplifting Laver function. So every strongly uplifting cardinal has a strongly uplifting Laver function in $L$. Following the terminology of~\cite{Hamkins:LaverDiamond}, we say that the Laver diamond principle $\LD_\kappa^{\!\!\!\hbox{\scriptsize\rm str-uplift}}$ holds for a strongly uplifting cardinal $\kappa$, if there is such a Laver function. And so we have proved that $\LD_\kappa^{\!\!\!\hbox{\scriptsize\rm str-uplift}}$ holds under $V=\HOD$ for any strongly uplifting cardinal $\kappa$. Meanwhile, we are unsure whether every strongly uplifting cardinal must have a full Laver function. Perhaps this can fail; it is conceivable that one might generalize the proof of the main theorem of~\cite{DzamonjaHamkins2006:DiamondCanFail} to the superstrong unfoldable context, in order to produce strongly uplifting cardinals lacking $\Diamond_\kappa(\REG)$, which would prevent the existence of Laver functions. We shall leave that question for another project.

\begin{question}
 Is it relatively consistent that a strongly uplifting cardinal has no strongly uplifting Laver function? Can $\Diamond_\kappa(\REG)$ fail when $\kappa$ is strongly uplifting?
\end{question}

Finally, let us remark that the Laver functions interact well with the embedding characterizations of theorem~\ref{T.EmbeddingCharacterizations}, with the effect that after tracing through the equivalences, one finds the corresponding embeddings $j:M\to N$, for which $j(\ell)(\kappa)$ has the desired value.

%%%%%%%%%%%%%%%%%%%%%%%%%%%%%%%%%%%%%%%%%%%%%%%%%%%%%%%%%%%%%%%%%%%%%%%%%%%%%%%%%%%%%%%%%%%%%%%%%%%%%%%%%%%%%%%%%%%%%%%%%%%%%%%%%%%%%%%%%%%%%%%%%%%%%%%%%%%%%%%%%%%%%%%%%%%%%%%%%%%%%%%%%%%%%%%%%%%%%%%%%%%%%%%%%%%%%
%%%%%%%%%%%%%%%%%%%%   section:      BOLDFACE RESURRECTION    %%%%%%%%%%%%%%%%%%%%%%%%%%%%%%%%%%%%%%
\section{The boldface resurrection axioms}\label{S.BoldfaceRA}

We shall aim in section~\ref{S.Equiconsistency} to prove that the existence of a strongly uplifting cardinal is equiconsistent with the boldface resurrection axioms, which we shall now introduce. These axioms generalize and strengthen many instances of the bounded forcing axioms that are currently a focus of investigation in the set-theoretic research community. The main idea is simply to generalize the resurrection axioms of~\cite{HamkinsJohnstone2014:ResurrectionAxiomsAndUpliftingCardinals} by allowing an arbitrary parameter $A$. We use the notation $\continuum$ to denote the continuum, that is, the cardinality $\continuum=2^\omega=|\R|$, and $\Hc$ denotes the collection of sets of hereditary size less than $\continuum$.

\begin{definition}\rm
 Suppose that $\Gamma$ is any definable class of forcing notions.
\begin{enumerate}
\item The boldface resurrection axiom $\RAbf(\Gamma)$ asserts that for every $\Q\in\Gamma$ and $A\of \continuum$, there is
      $\Rdot\in\Gamma^{V^\Q}$ such that if $g*h\of\Q*\Rdot$ is $V$-generic, then there is $A^*$ in $V[g*h]$ with
      $$\<\Hc,{\in},A>\elesub\<\Hc^{V[g*h]},{\in},A^*>.$$
 \item The weak boldface resurrection axiom $\wRAbf(\Gamma)$ drops the requirement that $\Rdot$ needs to be in $\Gamma^{V[g]}$.
\end{enumerate}
\end{definition}

These boldface resurrection axioms naturally strengthen the corresponding lightface versions $\RA(\Gamma)$ and $\wRA(\Gamma)$, which were the main focus of~\cite{HamkinsJohnstone2014:ResurrectionAxiomsAndUpliftingCardinals}, as the lightface forms amount simply to the special case of where $A$ is trivial or simply omitted. One may easily observe that $\RAbfall$ implies $\wRAbf(\Gamma)$ for any class $\Gamma$ of forcing notions, and $\RAbf(\Gamma)$ implies $\wRAbf(\Gamma)$. Moreover, if $\Gamma_1\of\Gamma_2$ are two classes of forcing notions, then $\wRAbf(\Gamma_2)$ implies $\wRAbf(\Gamma_1)$, but in general $\RAbf(\Gamma_2)$ need not imply $\RAbf(\Gamma_1)$.\footnote{For example, theorem~\ref{T.Equiconsistency} shows that $\RAbfall$ is consistent relative to a strongly uplifting cardinal, whilst corollary~\ref{Corollary.RAbf(Aleph1-preserving)Inconsistent} shows $\RAbf(\aleph_1\text{-preserving})$ is inconsistent, even though $\aleph_1\text{-preserving}\of \text{all}$.} Many further observations about the resurrection axioms made in~\cite{HamkinsJohnstone2014:ResurrectionAxiomsAndUpliftingCardinals} relativize easily to the boldface case.

If $\Gamma$ is a class of forcing notions and $\kappa$ and $\delta$ are cardinals, then the \emph{bounded forcing axiom $\BFA_\delta(\Gamma,\kappa)$}, introduced by Goldstern and Shelah~\cite{GoldsternShelah1995Nr507:BPFA}, is the assertion that whenever $\Q\in\Gamma$ and $\B=\text{r.o.}(\Q)$, if $\mathcal{A}$ is a collection of at most $\kappa$ many maximal antichains in $\B\setminus\{0\}$, each antichain of size at most $\delta$, then there is a filter on $\B$ meeting each antichain in $\mathcal{A}$. This axiom therefore places limitations both on the number of antichains to be considered, as well as on the sizes of those antichains. To simplify notation, the bounded forcing axiom $\BFA_\kappa(\Gamma,\kappa)$ is denoted more simply as $\BFA_\kappa(\Gamma)$; the $\delta$-bounded proper forcing axiom $\BFA_\delta(\proper,\omega_1)$ is denoted as $\PFA_\delta$; the $\delta$-bounded semi-proper forcing axiom $\BFA_\delta(\semiproper,\omega_1)$ is denoted $\SPFA_\delta$; the analogous $\delta$-bounded forcing axiom for axiom-A posets is denoted $\rm AAFA_\delta$; and the $\delta$-bounded forcing axiom for forcing that preserves stationary subsets of $\omega_1$ is denoted $\MM_\delta$.

\begin{theorem}\label{T.RAbfImplyBoundedFA}
 $\wRAbf(\Gamma)$ implies $\BFA_\continuum(\Gamma,\kappa)$ for any cardinal $\kappa<\continuum$.
\end{theorem}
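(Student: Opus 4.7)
The plan is to reflect from the forcing extension back to $V$ a $\Sigma_1$ assertion expressing that the antichains can be simultaneously met by a coherent choice function. Given $\Q\in\Gamma$ with $\B=\text{r.o.}(\Q)$ and $\mathcal{A}=\<A_\alpha:\alpha<\kappa>$ a family of at most $\kappa<\continuum$ many maximal antichains in $\B\setminus\{0\}$, each of size at most $\continuum$, I would first pass to the (not-necessarily-complete) Boolean subalgebra $\B_0\of\B$ generated by $\bigcup\mathcal{A}$. This $\B_0$ has size at most $\continuum$, each $A_\alpha$ remains a maximal antichain in $\B_0\setminus\{0\}$ (since for any nonzero $p\in\B_0$ the maximality of $A_\alpha$ in $\B$ supplies $q\in A_\alpha$ with $p\wedge q\neq 0$, and $p\wedge q\in\B_0$), and any filter on $\B_0$ meeting each $A_\alpha$ extends by upward closure in $\B$ to a filter on $\B$ meeting each $A_\alpha$. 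So it suffices to produce such a filter on $\B_0$ in $V$.

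Next, I would fix a bijection $\pi:\B_0\to|\B_0|\of\continuum$ and transfer the Boolean operations and the antichains to relations and subsets of $\continuum$. Using a standard G\"odel coding, this data---the underlying set of $\B_0$, its meet relation, and the $\kappa$-indexed family of antichains---can be packaged into a single parameter $A\of\continuum$. Applying $\wRAbf(\Gamma)$ to $\Q$ with parameter $A$ yields $\Rdot$ such that whenever $g*h\of\Q*\Rdot$ is $V$-generic, there is $A^*\in V[g*h]$ with $\<\Hc,{\in},A>\elesub\<\Hc^{V[g*h]},{\in},A^*>$. Since elementary extension of such structures forces $A^*\intersect\Hc=A$, the decoding of $A^*$ in $\Hc^{V[g*h]}$ agrees with the decoding of $A$ in $\Hc$ on all objects of $\Hc$, so both structures recognize the same coded algebra and the same indexed family of antichains.

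The reflection I need is for the $\Sigma_1$ assertion $\varphi(A,\kappa)$: \emph{there is a function $f$ with domain $\kappa$ such that for each $\alpha<\kappa$, $f(\alpha)$ belongs to the $\alpha^{\rm th}$ antichain coded by $A$, and every finite meet $f(\alpha_1)\wedge\cdots\wedge f(\alpha_n)$ is nonzero in the algebra coded by $A$}. Any such witness $f$ has domain and range inside $\continuum$, hence hereditary size at most $\kappa<\continuum$, and so lies in $\Hc$. In $V[g*h]$, the $V$-generic filter $G$ on $\B$ derived from $g$ meets every maximal antichain of $\B$ lying in $V$, in particular each $A_\alpha$; choosing $b_\alpha\in G\intersect A_\alpha$ and composing with $\pi$ yields a witness for $\varphi(A^*,\kappa)$ in $\<\Hc^{V[g*h]},{\in},A^*>$. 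By elementarity, $\varphi(A,\kappa)$ then holds in $\<\Hc,{\in},A>$, so a witness $f\in V$ exists; pulling back via $\pi^{\inverse}$ and taking the filter on $\B$ generated by $\{\pi^{\inverse}(f(\alpha)):\alpha<\kappa\}$ produces the filter required by $\BFA_\continuum(\Gamma,\kappa)$. The main technical step is the coding itself: arranging that the existence of a compatible choice function meeting all antichains is expressible in $\<\Hc,{\in},A>$ and witnessed by an element of $\Hc$. The hypothesis $\kappa<\continuum$ is precisely what ensures that such a witness has small enough hereditary size to reside in $\Hc$, so that the reflection via elementarity can take place.
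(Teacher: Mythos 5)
Your proposal is correct and takes essentially the same route as the paper's own proof: pass to the subalgebra $\B_0$ generated by the antichains, code it and the $\kappa$-indexed antichain family into a single predicate on $\continuum$, apply $\wRAbf(\Gamma)$, use the generic filter to produce a size-$\kappa$ compatible selection meeting every antichain inside $\Hc^{V[g*h]}$, and pull it back to $V$ by elementarity to generate the required filter on $\B$. The only quibble is your phrase ``hereditary size at most $\kappa$'': the transitive closure of the witness $f$ also absorbs all ordinals below the codes in its range, so the right bound is $\max\bigl(\kappa,\sup_{\alpha<\kappa}|f(\alpha)|\bigr)$; this still places the witness in $\Hc^{V[g*h]}$, which is precisely the step the paper takes when it puts the set $F=\set{p_\alpha\st\alpha<\kappa}$ into $\Hc^{V[g][h]}$.
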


\begin{proof}
This argument extends the argument of~\cite[thm~4]{HamkinsJohnstone2014:ResurrectionAxiomsAndUpliftingCardinals} to antichains of size $\continuum$, the point being that the boldface hypothesis allows us to handle such larger antichains by treating them as predicates on $\Hc$ rather than as elements of $\Hc$. Assume $\wRAbf(\Gamma)$ and fix any cardinal $\kappa<\continuum$. Suppose that $\B=\text{r.o.}(\Q)$ for some poset $\Q\in\Gamma$, and ${\mathcal A}=\{A_\alpha\st\alpha<\kappa\}$ is a collection of at most $\kappa$ many maximal antichains in $\B\setminus\{0\}$, each antichain of size at most $\continuum$. Let $\B_0$ be the subalgebra of $\B$ generated by $\Union_\alpha A_\alpha$. This has size at most $\continuum$, and so by replacing $\B$ with an isomorphic copy we may assume that both ${\mathcal A}$ and $\B_0$ are subsets of $\Hc$ of size $\continuum$. Let $g\of\B$ be a $V$-generic filter. It follows that $g$ is also $\mathcal A$-generic, and so $g\intersect\B_0$ meets every $A_\alpha$. By the $\wRAbf(\Gamma)$, there is some further forcing $h\of\dot\R$ after which we may find an elementary extension  $\<\Hc,{\in},\B_0,A_\alpha>_{\alpha<\kappa}\elesub\<\Hc^{V[g][h]},{\in},\B_0^*,A_\alpha^*>_{\alpha<\kappa}$, using the fact that we may code all this additional structure into a single predicate on $\continuum$. For each $\alpha<\kappa$, let $p_\alpha\in g\intersect \B_0\intersect A_\alpha$, and let $F=\set{p_\alpha\st\alpha<\kappa}$, which is a set of size $\kappa$ in $V[g]$ and hence in $\Hc^{V[g][h]}$, which has the finite-intersection property and meets every antichain $A_\alpha$. By elementarity, therefore, there must be a such a set already in $\Hc^V$, and the filter in $\B$ generated by this set will meet every $A_\alpha$, thereby witnessing the desired instance of $\BFA_\continuum(\Gamma,\kappa)$.
\end{proof}

We have the following immediate corollary.

\begin{corollary} \label{C:ConseqwRAbf}\
\begin{enumerate}
 \item $\wRAbf(\proper)+\neg\CH$ implies $\PFA_\continuum$.
 \item $\wRAbf(\semiproper)+\neg\CH$ implies $\SPFA_\continuum$.
 \item $\wRAbf(\text{axiom-A})+\neg\CH$ implies $\rm AAFA_\continuum$.
 \item $\wRAbf(\text{preserving stationary subsets of }\omega_1)+\neg\CH$ implies $\MM_\continuum$
\end{enumerate}
\end{corollary}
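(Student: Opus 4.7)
The plan is to apply Theorem \ref{T.RAbfImplyBoundedFA} directly, observing that each of the four bounded forcing axioms in the conclusion is simply $\BFA_\continuum(\Gamma,\omega_1)$ for the appropriate class $\Gamma$, by the notational conventions introduced just before Theorem \ref{T.RAbfImplyBoundedFA}: $\PFA_\delta=\BFA_\delta(\proper,\omega_1)$, $\SPFA_\delta=\BFA_\delta(\semiproper,\omega_1)$, $\mathrm{AAFA}_\delta=\BFA_\delta(\text{axiom-A},\omega_1)$, and $\MM_\delta=\BFA_\delta(\text{preserving stationary subsets of }\omega_1,\omega_1)$.

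The crucial arithmetic is that the hypothesis $\neg\CH$ gives exactly $\omega_1<\continuum$, so $\omega_1$ falls within the range of cardinals $\kappa<\continuum$ permitted by Theorem~\ref{T.RAbfImplyBoundedFA}. Therefore, for any class $\Gamma$ among $\proper$, $\semiproper$, axiom-A forcing, and forcing preserving stationary subsets of $\omega_1$, the hypothesis $\wRAbf(\Gamma)$ yields $\BFA_\continuum(\Gamma,\omega_1)$ by Theorem~\ref{T.RAbfImplyBoundedFA} with $\kappa=\omega_1$, and this is precisely the assertion listed in the corresponding clause.

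There is no real obstacle here; the work has already been done in Theorem~\ref{T.RAbfImplyBoundedFA}, whose boldface hypothesis was specifically designed to accommodate antichains of size up to $\continuum$ by coding them as predicates on $\Hc$ rather than as elements of $\Hc$. The only thing to verify in the proof proper is that each of the four forcing classes in question is a definable class of forcing notions to which the general theorem applies, which is immediate from their standard definitions. I would therefore write the proof as a single sentence invoking Theorem~\ref{T.RAbfImplyBoundedFA} with $\kappa=\omega_1$, noting that $\neg\CH$ makes this choice of $\kappa$ admissible, and unpacking the resulting $\BFA_\continuum(\Gamma,\omega_1)$ according to the notational conventions.
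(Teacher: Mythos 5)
Your proposal is correct and is exactly the argument the paper intends: the corollary is stated as an immediate consequence of Theorem~\ref{T.RAbfImplyBoundedFA}, applied with $\kappa=\omega_1$ (admissible since $\neg\CH$ gives $\omega_1<\continuum$), and then unpacked via the notational conventions $\PFA_\delta=\BFA_\delta(\proper,\omega_1)$, $\SPFA_\delta=\BFA_\delta(\semiproper,\omega_1)$, ${\rm AAFA}_\delta$, $\MM_\delta$. Nothing is missing.
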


The conclusion $\PFA_\continuum$ of (1) is equiconsistent with the existence of an $H_{\kappa^+}$-reflecting cardinal, by a result due to Miyamoto~\cite{Miyamoto1998:WeakSegmentsOfPFA}, and $H_{\kappa^+}$-reflecting cardinals are exactly the same as strongly unfoldable cardinals. The same is true for the conclusion $\SPFA_\continuum$ of (2). Miyamoto's argument~\cite{Miyamoto1998:WeakSegmentsOfPFA} shows in fact that $\rm AAFA_{\omega_2}$ is sufficient to conclude that $\omega_2$ is strongly unfoldable in $L$ and so the conclusion $\rm AAFA_\continuum$ of (3) is also equiconsistent with the existence of a strongly unfoldable cardinal. The failure of $\CH$ is of course a necessary hypothesis in statements (1)-(4) of Corollary~\ref{C:ConseqwRAbf}, because the conclusions imply $\neg\CH$, while all the weak boldface resurrection axioms are compatible with \CH, in light of the fact that they are all implied by $\RAbfall$, which implies \CH\ by~\cite[thm~5]{HamkinsJohnstone2014:ResurrectionAxiomsAndUpliftingCardinals}.

The boldface resurrection axioms admit the following useful embedding characterization, which is analogous to that of the strongly uplifting cardinals. The hypothesis $|\Hc|=\continuum$, a consequence of \MA, will hold in the principal cases in which we shall be interested.

\begin{theorem}[Embedding characterization of boldface resurrection]
If $|\Hc|=\continuum$, then the following are equivalent for any class $\Gamma$.\label{T.RAbfEmbedChar}
\begin{enumerate}
 \item The boldface resurrection axiom $\RAbf(\Gamma)$.
\item For every $\Q\in\Gamma$ and every transitive set $M\satisfies\ZFC^-$ with $|M|=\continuum\in M$ and $\Hc\of M$, there is
       $\Rdot\in\Gamma^{V^\Q}$, such that in any forcing extension $V[g*h]$ by $\Q*\Rdot$, there is an elementary
      embedding $$j:M\to N$$ with $N$ transitive, $j\restrict\Hc=\id$, $j(\continuum)=\continuum^{V[g*h]}$ and $\Hc^{V[g*h]}\of N$.
\end{enumerate}
Similarly, the weak boldface axiom $\wRAbf(\Gamma)$ is equivalent to the embedding characterization obtained by omitting the requirement that $\Rdot\in\Gamma^{V^\Q}$.
\end{theorem}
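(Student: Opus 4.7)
My plan is to prove both directions by coding models as subsets of $\continuum$, leveraging the hypothesis $|\Hc|=\continuum$. The direction $(2)\implies(1)$ is cleanest, so I would begin there. Given $A\of\continuum$ and $\Q\in\Gamma$, I would take an elementary substructure $X\prec H_\theta$ for some large regular $\theta$, with $\Hc\cup\{A,\continuum,\Q\}\of X$ and $|X|=\continuum$, and let $M$ be its Mostowski collapse. Then $M$ is transitive, satisfies $\ZFC^-$, has size $\continuum$, contains $A$ and $\continuum$ as fixed points of the collapse (since $\Hc$ is transitive and $\of X$), and satisfies $\Hc\of M$. Applying hypothesis (2) produces $\Rdot\in\Gamma^{V^\Q}$ and an embedding $j\colon M\to N$ in $V[g*h]$ with the stated properties, and I would set $A^*=j(A)\of j(\continuum)=\continuum^{V[g*h]}$. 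The key observation is that $\Hc=H_\continuum^M$ and $\Hc^{V[g*h]}=H_{\continuum^{V[g*h]}}^N$, since the bijections witnessing hereditary cardinality below $\continuum$ lie in $\Hc\of M$ and those below $\continuum^{V[g*h]}$ lie in $\Hc^{V[g*h]}\of N$. Because $H_\continuum$ is a $\ZFC^-$-definable class in the parameter $\continuum$, relativization of any formula $\varphi(\vec a)$ with $\vec a\in\Hc$ yields
\begin{align*}
\<\Hc,\in,A>\satisfies\varphi(\vec a) &\Iff \<M,\in,A>\satisfies\varphi^{H_\continuum}(\vec a,\continuum)\\
&\Iff \<N,\in,A^*>\satisfies\varphi^{H_{\continuum^{V[g*h]}}}(\vec a,\continuum^{V[g*h]})\\
&\Iff \<\Hc^{V[g*h]},\in,A^*>\satisfies\varphi(\vec a),
\end{align*}
the middle step using the elementarity of $j$ with $j(\vec a)=\vec a$, $j(\continuum)=\continuum^{V[g*h]}$, and $j(A)=A^*$.

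For $(1)\implies(2)$, I would reverse the coding. Given $M$ and $\Q$, fix a bijection $\pi\colon\continuum\to M$ and encode as a single subset $A\of\continuum$ (via a standard pairing) the relation $E$ on $\continuum$ induced from $\in$ via $\pi$, the marker $I=\pi^{-1}[\Hc]$, and the ordinal $\alpha_\continuum=\pi^{-1}(\continuum)$. Applying $\RAbf(\Gamma)$ to $A$ gives $\Rdot\in\Gamma^{V^\Q}$ and $A^*$ in $V[g*h]$ with $\<\Hc,\in,A>\elesub\<\Hc^{V[g*h]},\in,A^*>$. I would then decode externally in $V[g*h]$: extensionality of the coded relation $E^*$ on $\continuum^{V[g*h]}$ is first-order and transfers directly, while its well-foundedness transfers by elementarity because any infinite $E^*$-descending sequence would be a countable object lying in $\Hc^{V[g*h]}$. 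The Mostowski collapse $\tau^*$ of $(\continuum^{V[g*h]},E^*)$ produces a transitive $N$, and $j(x):=\tau^*(\pi^{-1}(x))$ is elementary from $M$ to $N$, since for $\vec x\in M$ the truth $M\satisfies\varphi(\vec x)$ is equivalent to an assertion about $(\continuum,E)$ at the $\Hc$-parameters $\pi^{-1}(\vec x)$ that is encoded in $A$.

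The main obstacle will be verifying the three remaining structural conditions on $j$ in this direction, each handled by transferring an expressible statement about $A$. For $j\restrict\Hc=\id$: for each $x\in\Hc$, the equation $\tau(\pi^{-1}(x))=x$ uses only $\Hc$-parameters and transfers by elementarity, and since $j$ fixes those parameters one gets $\tau^*(\pi^{-1}(x))=x$. For $j(\continuum)=\continuum^{V[g*h]}$: the assertion that the $E$-predecessors of $\alpha_\continuum$ are in order-preserving bijection with $\Hc\cap\Ord$ is expressible in $\<\Hc,\in,A>$, and transfers to an order-preserving bijection between the $E^*$-predecessors of $\alpha_\continuum$ and $\Hc^{V[g*h]}\cap\Ord=\continuum^{V[g*h]}$, whence $\tau^*(\alpha_\continuum)=\continuum^{V[g*h]}$. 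For $\Hc^{V[g*h]}\of N$: the expressible statement that every element of the universe lies in the $\tau$-image of $I$ transfers to give $\Hc^{V[g*h]}\of\tau^*[I^*]\of N$. The corresponding equivalence for $\wRAbf(\Gamma)$ is obtained by the same argument with the requirement $\Rdot\in\Gamma^{V^\Q}$ simply dropped throughout.
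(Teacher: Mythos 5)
Your proposal is correct and takes essentially the same route as the paper's proof: code the model by a well-founded extensional relation on $\continuum$ packed into the predicate, transfer it by boldface resurrection, use the closure of $\Hc^{V[g*h]}$ under countable sequences to see the decoded relation is genuinely well-founded, and extract $j$ from the Mostowski collapse, verifying $j\restrict\Hc=\id$, $j(\continuum)=\continuum^{V[g*h]}$ and $\Hc^{V[g*h]}\of N$ by transferring the elementwise ``represented by an ordinal'' statements, just as the paper does. Your converse direction, taking a size-$\continuum$ transitive collapse of a hull containing $\Hc\cup\{A,\continuum\}$ and restricting the resulting embedding to $\<\Hc,{\in},A>$, likewise matches the paper's argument.
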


\begin{proof} Suppose that $\RAbf(\Gamma)$ holds. Fix any $\Q\in\Gamma$ and any transitive $M\satisfies\ZFC^-$ with $|M|=\continuum\in M$ and $\Hc\of M$. Find a relation $E$ on $\continuum$ and an isomorphism $\tau:\<M,{\in}>\cong\<\continuum,E>$, and let $A\of\continuum$ code $E$ via a canonical
pairing function. By $\RAbf(\Gamma)$, there is $\Rdot\in\Gamma^{V^\Q}$ such that if $g*h\of\Q*\Rdot$ is $V$-generic, then in $V[g*h]$ there is a set
$A^*\of \continuum^{V[g*h]}$ such that $\<\Hc,{\in},A>\elesub\<\Hc^{V[g*h]},{\in},A^*>$. The set $A^*$ codes a relation $E^*$ on $\continuum^{V[g*h]}$. Since
$\<\Hc,{\in},A>$ knows that $E$ is well founded, it follows by elementarity that $\<\Hc^{V[g*h]},{\in},A^*>$ thinks $E^*$ is well founded. Since
$\continuum^{V[g*h]}$ has uncountable cofinality, this structure is closed under countable sequences in $V[g*h]$, and $E^*$ really is well founded in
$V[g*h]$. Let $\pi:\<\continuum,E^*>\cong\<N,{\in}>$ be the Mostowski collapse, and let $j=\pi\compose\tau:M\to N$, which can be considered as the three-step composition
$$\<M,{\in}>\quad \cong_\tau\quad \<\continuum,E>\quad \elesub\quad \<\continuum^{V[g*h]},E^*>\quad \cong_\pi\quad \<N,{\in}>,$$ which is therefore
elementary. Note that $j$ is the identity on objects in $\Hc$, since if $x=\tau^{-1}(\alpha)\in\Hc$, then $\<\Hc,{\in},A>$ knows that $x$ is represented by
$\alpha$ with respect to $E$, and so $\<\Hc^{V[g*h]},{\in},A^*>$ agrees that $x$ is represented by $\alpha$ with respect to $E^*$. Similarly, if
$\continuum$ is represented by $\beta$ with respect to $E$, then this can be verified in $\<\Hc,{\in},A>$, and so $\continuum^{V[g*h]}$ is represented by
$\beta$ with respect to $E^*$. Thus, $j(\continuum)=\continuum^{V[g*h]}$, as desired. Finally, $\<\Hc,{\in},A>$ knows that every object in $\Hc$ is
represented in $\<\continuum,E>$, so $\<\Hc^{V[g*h]},{\in},A^*>$ can verify that $\Hc^{V[g*h]}\of N$.

Conversely, suppose that (2) holds. Fix any $\Q\in\Gamma$ and any $A\of\continuum$. Since $|\Hc|=\continuum$, we may find a transitive $M\elesub
H_{\continuum^\plus}$ with $A,\continuum\in M$ and $\Hc\of M$ and $|M|=\continuum$. By (2), there is $\Rdot\in\Gamma^{V^\Q}$ such that in the corresponding forcing
extension $V[g*h]$ we have an embedding $j:M\to N$ with $N$ transitive, $j\restrict\Hc=\id$ and $j(\continuum)=\continuum^{V[g*h]}$ and $\Hc^{V[g*h]}\of N$. Restricting $j$
to $\<\Hc,{\in},A>$, we see that $\<\Hc,{\in},A>\elesub \<\Hc^{V[g*h]},{\in},j(A)>$, verifying this instance of the boldface resurrection axiom $\RAbf(\Gamma)$.

The same argument works in the case of the weak boldface axiom $\wRAbf(\Gamma)$, by omitting the requirement that $\Rdot\in\Gamma^{V^\Q}$.
\end{proof}

While~\cite[thm~6]{HamkinsJohnstone2014:ResurrectionAxiomsAndUpliftingCardinals} shows in the lightface context that the resurrection axioms $\RA(\text{proper})$ and $\RA(\text{semi-proper})$ and others are relatively consistent with $\CH$, this is no longer true in the boldface context.

\begin{theorem}
 If some $\Q$ in $\Gamma$ adds a real and forcing in $\Gamma$ necessarily preserves $\aleph_1$, then the boldface resurrection axiom $\RAbf(\Gamma)$ implies $\neg\CH$. Consequently, the boldface resurrection axioms for proper forcing, semi-proper forcing, and forcing that preserves stationary subsets of $\aleph_1$, respectively, each imply that the continuum is $\continuum=\omega_2$.\label{T.RAbfImplyNegCH}
\end{theorem}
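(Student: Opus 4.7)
I plan to argue by contradiction, using the boldface parameter to pin down the reals of $V$. Assume $\CH$ holds in $V$, so $\continuum=\omega_1$ and $\Hc=H_{\omega_1}$, and fix an enumeration $\R^V=\{r_\alpha\st\alpha<\omega_1\}$. With the G\"odel pairing function $\text{pair}$ (a bijection $\omega_1\cross\omega_1\to\omega_1$), define the predicate $A=\{\text{pair}(\alpha,n)\st\alpha<\omega_1,\ n\in r_\alpha\}\of\omega_1=\continuum$, so that the structure $\<\Hc,{\in},A>$ satisfies the sentence $\phi$: ``every real $r$ has the form $\{n\in\omega\st\text{pair}(\alpha,n)\in A\}$ for some ordinal $\alpha$.'' Take $\Q\in\Gamma$ to be a poset that adds a real, and apply $\RAbf(\Gamma)$ with parameter $A$: there is $\Rdot\in\Gamma^{V^\Q}$ such that any $V$-generic $g*h\of\Q*\Rdot$ yields $A^*\in V[g*h]$ with $\<\Hc,{\in},A>\elesub\<\Hc^{V[g*h]},{\in},A^*>$. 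Since both stages lie in $\Gamma$, which preserves $\aleph_1$, we have $\omega_1^V=\omega_1^{V[g*h]}$.

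The crucial observation is that $\<H_{\omega_1},{\in}>$ satisfies the first-order sentence ``every ordinal is countable'', as the witnessing bijection $\omega\to\alpha$ for $\alpha<\omega_1$ is itself hereditarily countable. By elementarity this transfers to $\<\Hc^{V[g*h]},{\in},A^*>$, forcing every ordinal of $\Hc^{V[g*h]}$ to be countable in $V[g*h]$, hence $\continuum^{V[g*h]}\leq\omega_1^{V[g*h]}=\omega_1^V$, and so $\CH$ persists to $V[g*h]$. Now let $r\in V[g*h]$ be a real added by $\Q$ but not in $V$; then $r\in\Hc^{V[g*h]}$, and elementarity applied to $\phi$ gives some $\alpha<\omega_1^V$ with $r=\{n\st\text{pair}(\alpha,n)\in A^*\}$. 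Because $A=A^*\intersect\Hc^V$ (as the predicate interpretation in an elementary substructure) and $\text{pair}(\alpha,n)<\omega_1^V$ whenever $\alpha<\omega_1^V$, membership of $\text{pair}(\alpha,n)$ in $A$ and in $A^*$ coincide, giving $r=r_\alpha\in\R^V$, a contradiction. The main delicacy to anticipate is ensuring that the witnessing $\alpha$ is countable, which is precisely the role of the ``every ordinal is countable'' argument together with $\aleph_1$-preservation; without it, $\alpha$ could in principle be as large as $\continuum^{V[g*h]}$, where the agreement between $A$ and $A^*$ breaks down.

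For the ``Consequently'' clause, each of the classes proper, semi-proper, and $\aleph_1$-stationary-preserving forcing preserves $\aleph_1$ and contains Cohen forcing (which adds a real), so the main statement yields $\neg\CH$ in each case, hence $\continuum\geq\omega_2$. Combined with Corollary~\ref{C:ConseqwRAbf}, which yields $\PFA_\continuum$, $\SPFA_\continuum$, and $\MM_\continuum$ respectively, and the classical theorem of Moore that $\BPFA$ (which is implied by each of these) forces $\continuum=\omega_2$, we conclude $\continuum=\omega_2$ as desired.
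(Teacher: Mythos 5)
Your proof of the main implication is essentially the paper's own argument: assume \CH, code the reals of $V$ into a single predicate $A\of\omega_1=\continuum$, apply $\RAbf(\Gamma)$ with some $\Q\in\Gamma$ adding a real, transfer ``every object is countable'' from $\<\Hc,{\in},A>$ to $\<\Hc^{V[g*h]},{\in},A^*>$ to get \CH\ in $V[g*h]$, use $\aleph_1$-preservation for both steps (legitimate, since $\Rdot\in\Gamma^{V^\Q}$ and membership in $\Gamma$ provably entails preserving $\aleph_1$) to conclude $\continuum^{V[g*h]}=\omega_1^{V[g*h]}=\omega_1^V$, and then note that $A^*$ agrees with $A$ on ordinals below $\omega_1^V$, so the new real would already be coded into $A$; this is exactly the paper's ``$A^*=A$'' step, which you carry out slightly more explicitly (and correctly, since the elementarity of the expanded structures gives $A=A^*\intersect\Hc$ and the relevant \Godel\ codes lie below $\omega_1^V$). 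Where you genuinely diverge is the ``consequently'' clause: the paper obtains the upper bound $\continuum\leq\omega_2$ by citing theorem~5 of its predecessor paper on the lightface resurrection axioms, whereas you route through Corollary~\ref{C:ConseqwRAbf} (available since you already have $\neg\CH$) to get $\PFA_\continuum$, $\SPFA_\continuum$, or $\MM_\continuum$, and then invoke Moore's theorem that $\BPFA$ implies $\continuum=\omega_2$. This works and is non-circular: $\PFA_\continuum=\BFA_\continuum(\proper,\omega_1)$ implies $\BPFA$ outright since $\omega_1\leq\continuum$, and because proper forcing is semi-proper and preserves stationary subsets of $\omega_1$, the other two bounded axioms also imply $\BPFA$. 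The trade-off is that your route leans on a substantially deeper external result (Moore's $\BPFA\Rightarrow\continuum=\aleph_2$), where the paper needs only the elementary bound on $\continuum$ already established in the lightface resurrection framework; both are valid.
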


\begin{proof}
Suppose that $\RAbf(\Gamma)$ and \CH\ hold. Let $A\of\omega_1=\continuum$ code all the reals of $V$, and let $\Q\in\Gamma$ be a forcing
notion adding at least one new real. By $\RAbf(\Gamma)$, there is a forcing notion $\Rdot$ in $\Gamma^{V^\Q}$ such that if $g*h\of\Q*\Rdot$ is
$V$-generic, then there is $A^*$ in $V[g*h]$ with $\<\Hc,{\in},A>\elesub\<\Hc^{V[g*h]},{\in},A^*>$. Since \CH\ holds, $\Hc=H_{\omega_1}$ and this structure
believes that every object is countable. By elementarity this is also true in $\Hc^{V[g*h]}$ and so the \CH\ holds in $V[g*h]$. Since we assumed that
forcing in $\Gamma$ preserves $\aleph_1$, it follows that $\omega_1^{V[g*h]}=\omega_1$. From this, it follows that $A^*=A$, and so the new real does
not appear on $A^*$, a contradiction.

It follows that the boldface resurrection axioms in the case of proper forcing, semi-proper forcing, and so on each imply $\continuum=\omega_2$,
since the argument just given shows they imply $\continuum$ is at least $\omega_2$, and it is at most $\omega_2$ by~\cite[thm~ 5]{HamkinsJohnstone2014:ResurrectionAxiomsAndUpliftingCardinals}.\end{proof}

Of course, to make the $\continuum=\omega_2$ conclusion, we didn't use the full power of the boldface axioms, with arbitrary predicates, but only
a single predicate $A$ as above, a well-order of the reals.

Essentially the same arguments as in~\cite[thm~8]{HamkinsJohnstone2014:ResurrectionAxiomsAndUpliftingCardinals} show that several instances of boldface resurrection axioms, including $\RAbf(\text{countably closed})$, $\RAbf(\text{countably distributive})$, $\RAbf(\text{does not add reals})$, and also the weak forms  $\wRAbf(\text{does not add reals})$ and $\wRAbf(\text{countably distributive})$, are each equivalent to the continuum hypothesis~$\CH$. And as in the case of the lightface resurrection axioms, it follows more generally, for any regular uncountable cardinal $\delta$, that each of the  boldface resurrection axioms $\RAbf(\ltdelta\text{-closed})$, $\RAbf(\ltdelta\text{-distributive})$, and $\RAbf(\text{does not add bounded}\text{ subsets of }\delta)$ is equivalent to the assertion that $\continuum\leq\delta$.

Also, the inconsistencies mentioned in~\cite[thm~9]{HamkinsJohnstone2014:ResurrectionAxiomsAndUpliftingCardinals} extend to the boldface context. In addition, we have the following.

\begin{corollary}\label{Corollary.RAbf(Aleph1-preserving)Inconsistent}
The boldface resurrection axiom $\RAbf(\aleph_1\text{-preserving})$ is inconsistent.
\end{corollary}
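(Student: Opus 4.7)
My plan is to derive a contradiction by applying $\RAbf(\aleph_1\text{-preserving})$ to a countably closed collapse forcing and exploiting cardinal preservation in the resulting elementary extension. By theorem~\ref{T.RAbfImplyNegCH} applied to Cohen forcing (which is $\aleph_1$-preserving and adds a real), the axiom implies $\continuum \geq \omega_2$. Consider $\Q = \Coll(\omega_1,\continuum^V)$, which is countably closed (hence $\aleph_1$-preserving) and collapses $\continuum^V$ to cardinality $\omega_1^V$ without adding reals, so $\continuum^{V[g]}=\omega_1^V$. Applying the axiom yields $\Rdot\in(\aleph_1\text{-preserving})^{V[g]}$ and, in $V[g*h]$, the elementary extension $\<\Hc^V,{\in},A>\elesub\<\Hc^{V[g*h]},{\in},A^*>$ for any chosen $A\of \continuum^V$.

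By $\aleph_1$-preservation, $\omega_1^V=\omega_1^{V[g*h]}$, and since $\omega_1^V\in \Hc^V$ has hereditary cardinality $\omega_1^V$ in $V[g*h]$, the inclusion $\Hc^V\of \Hc^{V[g*h]}$ forces $\continuum^{V[g*h]}>\omega_1^V$, i.e.\ $\continuum^{V[g*h]}\geq\omega_2^{V[g*h]}$. Now assuming $\continuum^V\geq\omega_3^V$, the $V$-cardinal $\omega_2^V$ is an element of $\Hc^V$, and $\Hc^V\satisfies\text{``}\omega_2^V\text{ is a cardinal''}$. By elementarity, since $|\omega_2^V|^{V[g*h]}=\omega_1^V<\continuum^{V[g*h]}$ (so that $\omega_2^V$ lies in the target structure), the target models the same; and since $H_{\continuum^{V[g*h]}}$ correctly computes cardinality of its elements, it follows that $\omega_2^V$ is a cardinal of $V[g*h]$. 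This contradicts the collapse by $\Q$, which made $\omega_2^V$ have cardinality $\omega_1^V$ in $V[g*h]$.

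The main obstacle is the remaining case $\continuum^V=\omega_2^V$, in which no $V$-cardinal lies strictly between $\omega_1^V$ and $\continuum^V$, so no element of $\Hc^V$ witnesses the cardinal-preservation contradiction above. Here I plan to use the boldface predicate crucially: let $A\of \continuum^V$ encode a bijection $\pi\from \continuum^V\to\R^V$ (which exists in $V$ since $|\R^V|=\continuum^V$). The source structure models ``$A$ codes a bijection between the ordinals and the reals of the universe,'' so by elementarity $A^*$ codes a bijection between the target's ordinals and $\R^{V[g*h]}$. Because $A\of A^*$, the rows of $A^*$ indexed by $\alpha<\continuum^V$ are forced to be exactly the reals $\pi(\alpha)\in\R^V$, while the surjectivity required by elementarity must account for the reals $\R^{V[g*h]}\setminus\R^V$ added by $\Rdot$; a careful cardinal-counting argument, using that $\continuum^V$ has cardinality $\omega_1^V$ in $V[g*h]$ while $\continuum^{V[g*h]}\geq\omega_2^{V[g*h]}$, yields the contradiction, lifting the cardinal-preservation obstruction of the first case into the boldface setting.
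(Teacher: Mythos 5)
Your opening step (deriving $\neg\CH$ from theorem~\ref{T.RAbfImplyNegCH}) agrees with the paper, and your first case is a correct argument: when $\continuum^V\geq\omega_3^V$, applying the axiom to the countably closed collapse $\Q=\Coll(\omega_1,\continuum^V)$ and using the parameter $\omega_2^V\in\Hc^V$ does give a contradiction, since the target structure $\Hc^{V[g*h]}$ contains the collapsing bijection and so correctly sees that $\omega_2^V$ is no longer a cardinal, against elementarity. The genuine gap is your remaining case $\continuum^V=\omega_2^V$, which is precisely the situation one would expect under such axioms. The ``careful cardinal-counting argument'' you defer to does not materialize: after $\Q=\Coll(\omega_1,\omega_2^V)$ and any further $\aleph_1$-preserving $\Rdot$, in $V[g*h]$ the set of new indices $[\continuum^V,\continuum^{V[g*h]})$ has cardinality $\continuum^{V[g*h]}$ (because $\continuum^V$ now has size $\omega_1<\continuum^{V[g*h]}$), while the set of new reals $\R^{V[g*h]}\setminus\R^V$ also has cardinality $\continuum^{V[g*h]}$ (the old reals number only $\omega_1$ there). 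So there is no pigeonhole obstruction at all to a bijection $\pi^*\from\continuum^{V[g*h]}\to\R^{V[g*h]}$ extending $\pi$ with the old reals forming an initial segment of the enumeration; every first-order consequence of this configuration that your sketch mentions is consistent with the target structure. Since no actual contradictory statement of $\<\Hc,{\in},A>$ is identified, the inconsistency is not established in this key case, and within your collapsing strategy it cannot be, because when $\continuum^V=\omega_2^V$ no $V$-cardinal inside $\Hc^V$ is collapsed by $\Q$.

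The paper closes the argument by a different and uniform route, which is also the natural repair of your Case 2: the class of $\aleph_1$-preserving forcing contains forcing that destroys the stationarity of a stationary, co-stationary set $S\of\omega_1$ (shooting a club through the complement). Under $\neg\CH$ we have $S\in\Hc$, so no boldface predicate is even needed---$S$ serves as a parameter; the source thinks $S$ is stationary, while after the killing forcing (and any further forcing) a club disjoint from $S$ lies in $\Hc^{V[g*h]}$, contradicting elementarity. This is the remark following~\cite[thm~5]{HamkinsJohnstone2014:ResurrectionAxiomsAndUpliftingCardinals}, and it shows that already the lightface $\RA(\aleph_1\text{-preserving})$ implies $\CH$; combined with your first step it yields the inconsistency with no case division. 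As written, your proof is incomplete for $\continuum^V=\omega_2^V$.
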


\begin{proof} On the one hand, the axiom $\RAbf(\aleph_1\text{-preserving})$ implies $\neg\CH$ by theorem~\ref{T.RAbfImplyNegCH}. On the other hand, since $\aleph_1$-preserving forcing can destroy a stationary subset of $\omega_1$, we have that $\RAbf(\aleph_1\text{-preserving})$ implies \CH\ by the remarks after~\cite[thm~5]{HamkinsJohnstone2014:ResurrectionAxiomsAndUpliftingCardinals}.\end{proof}

\section{Strongly uplifting cardinals are equiconsistent with boldface resurrection}\label{S.Equiconsistency}

Let us now prove that the existence of a strongly uplifting cardinal is equiconsistent with various natural instances of the boldface resurrection axiom.

\begin{theorem}\label{T.Equiconsistency}
The following theories are equiconsistent over \ZFC.
\begin{enumerate}
 \item There is a strongly uplifting cardinal.
 \item There is a superstrongly unfoldable cardinal.
 \item There is an almost-hugely unfoldable cardinal.
 \item The boldface resurrection axiom for all forcing.
 \item The boldface resurrection axiom for proper forcing.
 \item The boldface resurrection axiom for semi-proper forcing.
 \item The boldface resurrection axiom for c.c.c.~forcing.
 \item The weak boldface resurrection axioms for countably-closed forcing, for axiom-A forcing, for proper forcing and for semi-proper forcing, respectively, plus $\neg\CH$.
\end{enumerate}
\end{theorem}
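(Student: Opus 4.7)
The plan is to exploit that Theorem~\ref{T.EmbeddingCharacterizations} already establishes the in-$\ZFC$ equivalences among (1), (2), and (3), so the remaining task is to show that each resurrection axiom in (4)--(8) is equiconsistent with (1). I would close the equiconsistency cycle by two reciprocal arguments: (1) forces each of (4), (5), (6), (7) via a lottery-style iteration of length $\kappa$ guided by a Laver or Menas function for the strongly uplifting cardinal, each of those trivially entails the corresponding weak form in (8), and (8) reflects back down to give (1) in $L$.

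For the forward direction starting from (1), let $\kappa$ be strongly uplifting and let $\ell$ be the \OD-anticipating Laver function from Theorem~\ref{T.LaverFunctions} (the Menas function from Theorem~\ref{T.MenasFunction} suffices when only cruder control is required). In analogy with~\cite{HamkinsJohnstone2014:ResurrectionAxiomsAndUpliftingCardinals}, I would perform an appropriate-support lottery iteration $\P_\kappa$ of length $\kappa$, holding at each suitable stage $\alpha<\kappa$ a lottery among posets in $\Gamma\cap V_\alpha$ biased toward $\ell(\alpha)$ when it codes such a poset, and arranging that $\kappa$ becomes $\omega_2=\continuum$ in $V[G]$. To verify $\RAbf(\Gamma)$ in $V[G]$, take $A\of\continuum$ and $\Q\in\Gamma^{V[G]}$; by Theorem~\ref{T.EmbeddingCharacterizations}(4) applied to a sufficiently large target, produce $j:M\to N$ with critical point $\kappa$, $j(\kappa)$ as large as desired, $V_{j(\kappa)}\of N$, $N^{<j(\kappa)}\of N$, and such that $j(\ell)(\kappa)$ codes a name for $\Q$. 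Then $j(\P_\kappa)$ factors at stage $\kappa$ as $\P_\kappa*\Q*\Rdot$; forcing further with $\Q*\Rdot$ lifts $j$ to $\hat\jmath : M[G]\to N[G*g*h]$, and the restriction to $\Hc^{V[G]}$ yields $\<\Hc^{V[G]},\in,A\>\elesub\<\Hc^{V[G*g*h]},\in,\hat\jmath(A)\>$, as $\RAbf(\Gamma)$ requires. The tail $\Rdot$ belongs to $\Gamma^{V[G][g]}$ because the iteration is drawn from $\Gamma$ and this class is preserved by the corresponding iteration theorem.

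For the reverse direction $(8)\Rightarrow_{\mathrm{Con}}(1)$, assume one of the weak boldface resurrection axioms in (8); by Theorem~\ref{T.RAbfImplyNegCH} we have $\continuum=\omega_2$. I would show directly that $\omega_2^V$ is strongly uplifting in $L$, relying on Theorem~\ref{Theorem.ThetaStrUpliftDwnAbsL} only incidentally. Given $A\of\omega_2$ with $A\in L$ and any ordinal $\theta$, fix a transitive $M\satisfies\ZFC^-$ of size $\continuum$ with $\Hc\of M$, $A\in M$, and containing enough of $L$ to encode a relevant initial segment (in particular enough of $L$ to see $A$), and apply the embedding characterization Theorem~\ref{T.RAbfEmbedChar} with a poset $\Q\in\Gamma$ chosen so that $\continuum^{V[g*h]}>\theta$ in the extension (a suitable collapse inside $\Gamma$ will do). The resulting embedding $j:M\to N$ with $j\restrict\Hc=\id$ and $j(\continuum)=\continuum^{V[g*h]}$ restricts to the $L$-part of $M$ to yield an elementary extension $\<L_{\omega_2},\in,A\>\elesub\<L_{j(\omega_2)},\in,j(A)\>$ with both sides in $L$ (by the absoluteness of $L$), witnessing the strong pseudo-upliftingness of $\omega_2$ in $L$; Theorem~\ref{T.ExtensionCharacterizations} then upgrades this automatically to full strong upliftingness there.

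The main obstacle is the forward direction for the restricted classes (5)--(7), where one must verify that the lottery iteration preserves properness, semi-properness, or the c.c.c.~at every stage, that $\Rdot$ remains in $\Gamma^{V[G][g]}$, and that the lift of $j$ through $\Q*\Rdot$ succeeds in the manner required. This is precisely the technical heart of Baumgartner's classical argument forcing $\PFA$ from a supercompact cardinal, now adapted from the $\Sigma_2$-reflection world of the lightface resurrection axioms to the boldface setting afforded by strong upliftingness, and carried out separately for each class $\Gamma$ using the appropriate preservation theorem for the chosen support. A secondary subtlety on the reverse side is the selection of the poset $\Q\in\Gamma$ that raises $\continuum$ above $\theta$; this is handled by analogues of the L\'evy collapse inside $\Gamma$, exactly as in the lightface consistency proof.
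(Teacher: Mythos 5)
Your overall plan has the right shape for (1)--(3), (5), (6) and (8), but the equiconsistency cycle you describe is broken at (4) and (7), and this is a genuine gap. You route every lower-bound argument through (8): ``(1) forces (4)--(7), each of those entails the corresponding weak form in (8), and (8) gives (1) in $L$.'' But $\RAbfall$ implies $\CH$ (as the paper notes, citing the lightface predecessor), so (4) cannot entail any of the theories in (8), each of which includes $\neg\CH$; and $\RAbf(\text{c.c.c.})$ implies only $\wRAbf(\text{c.c.c.})$, not the weak axioms for countably closed, axiom-A, proper or semi-proper forcing (the implication $\wRAbf(\Gamma_2)\Rightarrow\wRAbf(\Gamma_1)$ goes from the larger class to the smaller, and c.c.c.\ is contained in proper, not the reverse). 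So your argument never establishes $\Con(4)\Rightarrow\Con(1)$ or $\Con(7)\Rightarrow\Con(1)$. The paper handles these cases by separate direct arguments: for (4) one works under $\CH$ with $\kappa=\continuum=\omega_1$ and $\Q=\Coll(\omega,\theta)$, and for (7) with $\Q=\Add(\omega,\theta)$, in each case using the $\MA$-style consequences of the axioms both to secure the hypothesis $|\Hc|=\continuum$ of the embedding characterization and, crucially, to see that $j(\kappa)=\continuum^{V[g*h]}$ is regular (hence inaccessible) in $L$. Relatedly, even in your (8)-case your final step glosses a real point: the extension you produce is $\<L_\kappa,{\in},A>\elesub\<L_{j(\kappa)},{\in},j(A)>$, whereas strong pseudo-upliftingness in $L$ concerns the structures $\<V_\kappa^L,{\in},A>$; identifying these requires first knowing that $\kappa$ and $j(\kappa)$ are ($\beth$-fixed points, in fact inaccessible) in $L$, which is exactly where the paper's regularity/$\MA$ argument enters. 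Simply citing theorem~\ref{T.ExtensionCharacterizations} does not make that ``automatic.''

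There is also a problem in your forward direction for (7). A lottery iteration of c.c.c.\ forcing is not c.c.c.\ (an uncountable lottery sum of c.c.c.\ posets fails the c.c.c.), which the paper flags explicitly; the c.c.c.\ case instead uses a Laver/Baumgartner-style finite-support iteration that forces at stage $\beta$ exactly with $\ell(\beta)$ when appropriate. Moreover, your requirement that $j(\ell)(\kappa)$ code a name for an \emph{arbitrary} c.c.c.\ poset $\Q$ of $V[G]$ cannot be met by the $\OD$-anticipating Laver function of theorem~\ref{T.LaverFunctions}; one needs a genuine Laver function, which the paper obtains ``without loss'' because the statement is a consistency claim (strong upliftingness is downward absolute to $L$, and under $V=\HOD$ the $\OD$-anticipating function is fully anticipating). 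For (5) and (6) no such anticipation is needed at all: the lottery at stage $\kappa$ together with the Menas function already catches $\Q$. Your forward arguments for (5), (6) and the lifting mechanics are essentially the paper's (the paper lifts the elementary extensions directly rather than the embeddings $j:M\to N$, but that difference is cosmetic); the gaps above, however, are substantive and must be repaired for the theorem as stated.
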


\begin{proof}
On the one hand, we shall show that each of these boldface resurrection axioms implies that the continuum $\continuum$ is strongly uplifting in $L$; and conversely, if there is a strongly uplifting cardinal $\kappa$, then we'll explain how to achieve the various boldface resurrection axioms in suitable forcing extensions. Meanwhile, the large cardinal properties of (1), (2) and (3) are equivalent by theorem~\ref{T.EmbeddingCharacterizations}, and hence also equiconsistent.

To begin, suppose that the boldface resurrection axiom~$\RAbfall$ holds. This implies \CH\ by~\cite[thm~5]{HamkinsJohnstone2014:ResurrectionAxiomsAndUpliftingCardinals}. We claim that $\kappa=\continuum=\omega_1$ is strongly uplifting in $L$. Fix any $A\of\kappa$ in $L$, and choose any large ordinal $\theta$. Let $\Q=\Coll(\omega,\theta)$ be the forcing to collapse $\theta$ to $\omega$. Since $A\in L_\alpha$ for some
$\alpha<\kappa^{\plus L}$, there is a transitive set $M\satisfies\ZFC^-$ having $|M|=\continuum\in M$ and $\Hc\of M$ with $A\in L^M$. By \CH\ we know $|\Hc|=\continuum$, and so by theorem~\ref{T.RAbfEmbedChar}, there is $\Rdot$ such that
if $g*h\of\Q*\Rdot$ is $V$-generic, then in $V[g*h]$ there is a transitive set $N$ and an embedding $j:M\to N$ with critical point $\kappa$, such
that $j(\kappa)=\continuum^{V[g*h]}$ and $\Hc^{V[g*h]}\of N$. It follows that $\<L_\kappa,{\in},A>\elesub \<L_{j(\kappa)},{\in},j(A)>$, and since $j(A)$ is in $L^N$, this entire extension is in $L$. Since $j(\kappa)$ is a cardinal in $V[g*h]$, it is a cardinal in $L$, and by the elementarity of $L_\kappa\elesub L_{j(\kappa)}$, it will be a limit cardinal and thus a strong limit cardinal in $L$. By~\cite[thm~4]{HamkinsJohnstone2014:ResurrectionAxiomsAndUpliftingCardinals}, we know that $\MA$ holds in $V$ and this is verified in $\Hc$ and hence in $M$. Thus, $\MA$ holds in $N$
and hence in $\Hc^{V[g*h]}$ and hence in $V[g*h]$. So $\continuum^{V[g*h]}=j(\kappa)$ is regular in $V[g*h]$ and hence regular in $L$. It follows
that $j(\kappa)$ is inaccessible in $L$, and so we have $L_\kappa=(V_\kappa)^L$ and $L_{j(\kappa)}=(V_{j(\kappa)})^L$. Since $\theta$ could have been
made arbitrarily large, we have established that $\kappa$ is strongly uplifting in $L$. So the consistency of (4) implies that of (1).

We can argue similarly if either (5) or (6) holds. In this case, we use instead the poset $\Q=\Coll(\omega_1,\theta)$, but otherwise argue similarly that $\kappa=\continuum=\omega_2$ is strongly uplifting in $L$. If $A\of\kappa$ is in $L$, there is $M$ as above with $A\in L^M$. By (5) or (6) we find $\Rdot$ such that if $g*h\of\Q*\Rdot$ is $V$-generic, then in $V[g*h]$ there is $j:M\to N$ with critical point $\kappa$ and having
$j(\kappa)=\continuum^{V[g*h]}$. Once again, by $\MA$ considerations, $j(\kappa)$ is regular in $V[g*h]$ and hence in $L$, and $\<L_\kappa,{\in},A>\elesub \<L_{j(\kappa)},{\in},j(A)>$. Since $\kappa$ and $j(\kappa)$ are inaccessible in $L$, again we have $L_\kappa=(V_\kappa)^L$ and $L_{j(\kappa)}=(V_{j(\kappa)})^L$. Since $\neg\CH$ holds in $V$ and this is transferred to $V[g*h]$, we know that $j(\kappa)=\continuum^{V[g*h]}$ is
larger than $\theta$. So $\kappa$ is strongly uplifting in $L$. Thus, the consistency of either (5) or (6) implies that of (1).

In the case of (7), we have the boldface resurrection axiom for c.c.c.~forcing $\RAbf(\text{c.c.c.})$, and we use essentially the same argument, but with $\Q=\Add(\omega,\theta)$, where we add $\theta$ many Cohen reals. Note that by~\cite[thm 7]{HamkinsJohnstone2014:ResurrectionAxiomsAndUpliftingCardinals}, the continuum $\continuum$ is a weakly inaccessible cardinal, which is therefore inaccessible in $L$. If $A\of\kappa$ in $L$, there is $M$ as above with $A\in L^M$, and by $\RAbf(\text{c.c.c.})$ there is further c.c.c.~forcing $\dot \R$, such that if $g*h\of\Q*\dot\R$ is $V$-generic, then in $V[g][h]$ there is $j:M\to N$ with critical point $\kappa=\continuum$ having $j(\kappa)=\continuum^{V[g][h]}$, and so $\<L_\kappa,{\in},A>\elesub\<L_{j(\kappa)},{\in},j(A)>$ is the desired extension, which is in $L$ because $j(A)\in L^N$.

Each of the axioms mentioned in (8) implies $\wRAbf(\text{countably-closed})$, and this axiom can be treated the same as $\RAbf(\text{proper})$, since it implies the continuum is at most $\omega_2$ by~\cite[thm~5]{HamkinsJohnstone2014:ResurrectionAxiomsAndUpliftingCardinals} and hence equal to $\omega_2$ by our $\neg\CH$ assumption, and so we may take $\Q=\Coll(\omega_1,\theta)$ and proceed as in the case of (5) above, concluding that $\continuum$ is strongly uplifting in $L$.

Conversely, suppose that $\kappa$ is strongly uplifting. We shall produce the various boldface resurrection axioms in various suitable forcing extensions. For $\RAbfall$, let $\P=\Coll(\omega,\ltkappa)$ be the \Levy\ collapse of $\kappa$, and suppose that $G\of\P$ is $V$-generic. We argue as in~\cite[thms 18,19]{HamkinsJohnstone2014:ResurrectionAxiomsAndUpliftingCardinals}, but with parameters. Fix any $A\of\kappa$ in $V[G]$. There is a name $\Adot\in V$ such that $A=\Adot_G$, and we may assume $\Adot$ has hereditary size $\kappa$ and is coded by a set $A'\of\kappa$ in $V$. Now consider any poset $\Q\in V[G]$. Since $\kappa$ is strongly uplifting, there is a large inaccessible cardinal $\gamma$, above $\kappa$ and $|\dot{\Q}|$, such that $\<V_\kappa,{\in},A'>\elesub \<V_\gamma,{\in},A^*>$ for some $A^*\of\gamma$. Let $\P^*=\Coll(\omega,\ltgamma)$ be the \Levy\ collapse of $\gamma$. The forcing $\P*\Qdot$ is absorbed by this larger collapse, and so there is some quotient forcing $\Rdot$ such that $\P*\Q*\Rdot$ is forcing equivalent to $\P^*$. We may perform further forcing $g*h\of\Q*\Rdot$ and rearrange this to $G^*\of\P^*$, agreeing with $G$ on $\P$, such that $V[G][g*h]=V[G^*]$. By~\cite[lemma 17]{HamkinsJohnstone2014:ResurrectionAxiomsAndUpliftingCardinals}, we may lift the elementary extension to $\<V_\kappa[G],{\in},A',G>\elesub\<V_\gamma[G^*],{\in},A^*,G^*>$. Since $A'$ codes $\Adot$, this implies $\<V_\kappa[G],{\in},A,G>\elesub\<V_\gamma[G^*],{\in},B,G^*>$, where $B$ is the value of the name coded by $A^*$ using $G^*$. Since $\kappa$ and $\gamma$ are inaccessible in $V$, it follows after the \Levy\ collapse that $V_\kappa[G]=\Hc^{V[G]}$ and $V_\gamma[G^*]=\Hc^{V[G^*]}$, so this extension witnesses the desired instance of $\RAbfall$ in $V[G]$. So the consistency of (1) implies that of (4).

Similarly, from (1) we now force (5) using the \PFA\ lottery preparation, introduced in~\cite{Johnstone2007:Dissertation} (used independently in~\cite{NeemanSchimmerling2008:HierarchiesOfForcingAxioms}) based on the lottery iteration idea of~\cite{Hamkins2000:LotteryPreparation}. Suppose that $\kappa$ is strongly uplifting and that $G\of\P$ is $V$-generic for the \PFA\ lottery preparation $\P$, defined with respect to the Menas function $f$ constructed in theorem~\ref{T.MenasFunction}. We show that $V[G]\satisfies\RAbf(\proper)$. We know $\kappa=\continuum^{V[G]}=\aleph_2^{V[G]}$, and $V[G]$ satisfies the lightface $\RA(\proper)$. Suppose that $\Q$ is any proper forcing in $V[G]$ and $A\of\kappa$. Since $\P$ is $\kappa$-c.c., there is a name $\Adot\in H_{\kappa^+}^V$ with $A=\Adot_G$. By the Menas property, and by coding this extra structure into a subset of $\kappa$, we may find an inaccessible cardinal $\gamma$ and an extension $\<V_\kappa,{\in},\P,\Adot,f>\elesub\<V_\gamma,{\in},\P^*,\Adot^*,f^*>$, such that $f^*(\kappa)$ is as large as desired, and in particular above $|\Qdot|$. Since $\P^*$ is the \PFA\ lottery preparation of length $\gamma$ as defined in $V_\gamma$ from $f^*$, it follows that $\Q$ appears in the stage $\kappa$ lottery, since $\Q$ is proper in $V_\gamma[G]$. Below a condition opting for $\Q$ at stage $\kappa$, we may therefore factor $\P^*$ as $\P*\Q*\Ptail$. Suppose that $g*\Gtail\of\Q*\Ptail$ is $V[G]$-generic. By~\cite[lemma~17]{HamkinsJohnstone2014:ResurrectionAxiomsAndUpliftingCardinals}, we may lift the elementary extension to $\<V_\kappa[G],{\in},\P,\Adot,f,G>\elesub\<V_\gamma[G^*],{\in},\P^*,\Adot^*,f^*,G^*>$, where $G^*=G*g*\Gtail$. Since $A$ is definable from $\Adot$ and $G$, it follows that $\<V_\kappa[G],{\in},A>\elesub\<V_\gamma[G^*],{\in},A^*>$, where $A^*=\Adot^*_{G^*}$. Since $\P^*$ is the countable support iteration of proper forcing, it follows that $\Rdot=\Ptail$ is proper in $V[G][g]$, and since $V_\kappa[G]=\Hc^{V[G]}$ and $V_\gamma[G^*]=\Hc^{V[G][g][\Gtail]}$, we have established the desired instance of $\RAbf(\proper)$ in $V[G]$. Thus, the consistency of (1) implies that of (5).

A similar argument, using revised countable support and semi-proper forcing via the \SPFA\ lottery preparation, shows that the consistency of (1)
implies that of (6) as well.

Next, we explain how to force $\RAbf(\text{c.c.c.})$ from a strongly uplifting cardinal $\kappa$. As we discuss in connection with~\cite[thm~20]{HamkinsJohnstone2014:ResurrectionAxiomsAndUpliftingCardinals}, where we consider the lightface resurrection axiom for c.c.c.~forcing, the lottery-style iterations do not work with c.c.c.~forcing, since an uncountable lottery sum of c.c.c.~forcing is no longer c.c.c.  But nevertheless, as in the lightface context, one may proceed with the Laver/Baumgartner-style iteration using the Laver function. By theorem~\ref{T.LaverFunctions}, we may assume without loss that there is a Laver function $\ell\from\kappa\to V_\kappa$ for the strongly uplifting cardinal $\kappa$. Let $\P$ be the finite support c.c.c.~iteration of length $\kappa$, which forces at stage $\beta$ with $\ell(\beta)$, provided that this is a $\P_\beta$-name for c.c.c.~forcing. Suppose that $G\of\P$ is $V$-generic, and consider $V[G]$, where we claim that $\RAbf(\text{c.c.c.})$ holds. To see this, note first that $\kappa=\continuum^{V[G]}$, because unboundedly often the Laver function will instruct us to add another Cohen real. Suppose that $A\of\kappa$ in $V[G]$, that $\Q$ is c.c.c.~forcing there and consider any ordinal $\theta$. Let $\Adot,\Qdot$ be $\P$-names for $A$ and $\Q$, respectively.  Since $\kappa$ is strongly uplifting and $\ell$ is a Laver function, there is an extension $\<V_\kappa,{\in},\Adot,\P,\ell>\elesub\<V_\gamma,{\in},\Adot^*,\P^*,\ell^*>$, with $\gamma\geq\theta$ inaccessible and $\ell^*(\kappa)=\Qdot$. Note that $\P$ is definable from $\ell$, so we needn't have included it in the structure, but $\P^*$ is the corresponding $\gamma$-iteration defined from $\ell^*$, and furthermore $\P^*=\P*\Qdot*\Rdot$, where $\Rdot$ is the rest of the iteration after stage $\kappa$ up to $\gamma$, which is c.c.c.~in $V[G][g]$ since it is a finite-support iteration of c.c.c.~forcing. Let $g*h\of\Q*\Rdot$ be $V[G]$-generic, and by~\cite[lemma~17]{HamkinsJohnstone2014:ResurrectionAxiomsAndUpliftingCardinals} we may lift the extension to $\<V_\kappa[G],{\in},\Adot,\P,\ell,G>\elesub\<V_\gamma[G^*],{\in},\Adot^*,\P^*,\ell^*,G^*>$, where $G^*=G*g*h$. Since $A$ is definable from $\Adot$ and $G$, it follows that $\<V[G]_\kappa,{\in},A>\elesub\<V_\gamma^{V[G^*]},{\in},A^*>$, where $A^*=\Adot^*_{G^*}$. Since $V[G]_\kappa=H_\kappa^{V[G]}$ and $V_\gamma^{V[G^*]}=H_\gamma^{V[G][g][h]}$, and furthermore $\gamma=\continuum^{V[G][g][h]}$, this witnesses the desired instance of $\RAbf(\text{c.c.c.})$ in $V[G]$.

Finally, to achieve models of the axioms mentioned in statement (8) from a strongly uplifting cardinal, note that each of them is implied by $\RAbf(\text{semi-proper})$, and we've already achieved that in statement (6).
\end{proof}

Theorem~\ref{T.Equiconsistency} therefore illustrates how the equiconsistency established in~\cite{HamkinsJohnstone2014:ResurrectionAxiomsAndUpliftingCardinals} between the uplifting cardinals and the resurrection axioms generalizes to the boldface context, with the strongly uplifting cardinals and the boldface resurrection axioms.

\bibliographystyle{alpha}
\bibliography{MathBiblio,HamkinsBiblio}

\end{document}